\pgfplotsset{compat=1.15}
\definecolor{wrwrwr}{rgb}{0.3803921568627451,0.3803921568627451,0.3803921568627451}
\newtheorem{theorem}{Theorem}
\newtheorem{claim}{Claim}
\newtheorem{remark}{Remark}
\newtheorem{conjecture}{Conjecture}
\newtheorem{observation*}{Observation}
\newtheorem{lemma}{Lemma}
\newtheorem{question}{Question}
\newcommand{\ex}{{\rm  ex}}
\title{Edges not covered by monochromatic bipartite graphs}
\author[1,2]{Xiutao Zhu}
\author[1]{Ervin Győri}
\author[1,3]{Zhen He}
 \author[1,3]{Zequn Lv \footnote{Correspondoing author. Email: {lvzq19@mails.tsinghua.edu.cn}}}
\author[1,5]{Nika Salia}
\author[1]{Casey Tompkins}
\author[1,4]{Kitti Varga}
\date{}
\affil[1]{Alfr\'ed R\'enyi Institute of Mathematics, Hungarian Academy of Sciences. }
\affil[2]{Department of Mathematics, Nanjing University.}
\affil[3]{Department of Mathematical Sciences, Tsinghua University.}
\affil[4]{Department of Computer Science and Information Theory, Budapest University of Technology and Economics.}
\affil[5]{Extremal Combinatorics and Probability Group, Institute for Basic Science, Daejeon, South Korea.}
\begin{document}
\maketitle
\begin{abstract}
Let $f_k(n,H)$ denote the maximum number of edges not contained in any monochromatic copy of~$H$ in a $k$-coloring of the edges of $K_n$, and let $\ex(n,H)$ denote the Tur\'an number of $H$. 
In place of $f_2(n,H)$ we simply write $f(n,H)$.
In~\cite{Keevash}, Keevash and Sudakov proved that $f(n,H)=\ex(n,H)$ if $H$ is an edge-critical graph or $C_4$ and asked if this equality holds for any graph $H$.
All known exact values of this question require $H$ to contain at least one cycle.
In this paper we focus on acyclic graphs and have the following results:

(1) We prove $f(n,H)=\ex(n,H)$ when $H$ is a spider or a double broom.

(2) A \emph{tail} in $H$ is a path $P_3=v_0v_1v_2$ such that $v_2$ is only adjacent to $v_1$ and $v_1$ is only adjacent to $v_0,v_2$ in $H$. 
We obtain a tight upper bound for $f(n,H)$ when $H$ is a bipartite graph with a tail. This result provides the first bipartite graphs which answer the question of Keevash and Sudakov in the negative.

(3) Liu, Pikhurko and Sharifzadeh~\cite{Liu} asked if $f_k(n,T)=(k-1)\ex(n,T)$ when $T$ is a tree. 
We provide an upper bound for $f_{2k}(n,P_{2k})$ and show it is tight when $2k-1$ is prime. 
This provides a negative answer to their question.
\end{abstract}

\section{Introduction}
Given any graph $H$, the classical theorem of Ramsey asserts that there exists an integer $R(H,H)$ such that every $2$-coloring of the edges of the complete graph $K_n$ with $n\ge R(H,H)$ contains a monochromatic copy of $H$. 
A natural extension of this problem is determining how many monochromatic copies of $H$ there are.
For the case of $H=K_3$, this question was answered by Goodman~\cite{Goodman} and the case of $H=K_4$ was settled by Thomason~\cite{Thomason}.

In a different direction, one can ask how many edges must be contained in some monochromatic copy of $H$ in every $2$-coloring of the edges of $K_n$ (equivalently how many edges there can be in a $2$-coloring which are not contained in any monochromatic copy of $H$).
The first result about this topic is due to Erd\H{o}s, Rousseau and Schelp~\cite{Erdos}. 
They considered the maximum number of edges not contained in any monochromatic triangle in a $2$-coloring of the edges of~$K_n$.
Erd\H{o}s also wrote ``many further related questions can be asked'' in \cite{Erdos}.
In this paper, we will consider problems of this type.

Let $c$ be a $2$-coloring of the edges of $K_n$ and let $H$ be a graph. If an edge of $K_n$ is not contained in any monochromatic copy of $H$, then we say it is NIM-$H$. 
Let $E(c,H)$ denote the set of all NIM-$H$ edges in $K_n$ under the $2$-edge-coloring $c$ and let
\[f(n,H)=\max\big\{|E(c,H)|:~c \mbox{ is a $2$-edge-coloring of } K_n\big\}.\]
Let $\ex(n,H)$ be the Tur\'an number of $H$.
If one considers a $2$-coloring of the edges of $K_n$ in which one of the colors yields an extremal graph for $H$, then it is easy to see
\begin{equation}\label{eq1.1}
f(n,H)\ge \ex(n,H).
\end{equation}
As observed by Alon, the result on $f(n,K_3)$ by Erd\H{o}s, Rousseau and Schelp~\cite{Erdos} can also be deduced from a result of Pyber~\cite{Pyber} (see~\cite{Keevash}). 
In~\cite{Keevash}, Keevash and Sudakov studied $f(n,H)$ systematically. 
They proved that if $H$ contains an edge $e$ such that $\chi(H-e)<\chi(H)$ or $H=C_4$, then equality holds in~\eqref{eq1.1} for sufficiently large $n$.
Furthermore, they asked if the equality holds for all $H$.
\begin{question}[Keevash, Sudakov~\cite{Keevash}]\label{problem1}
Is it true that for any graph $H$ we have $f(n, H)=\ex(n, H)$ when $n$ is sufficiently large?
\end{question}

In 2017, Ma~\cite{Ma} provided an affirmative answer to Question~\ref{problem1} for an infinite family of bipartite graphs $H$, including all even cycles and complete bipartite graphs $K_{s,t}$ for $t>s^2-3s+3$ or $(s,t)\in \{(3,3), (4,7)\}$.
In 2019, Liu, Pikhurko and  Sharifzadeh~\cite{Liu} extended Ma's result by providing a larger family of bipartite graphs for which $f(n,H)=\ex(n,H)$ holds (however, the graphs they construct still contain a cycle).
Surprisingly, Yuan~\cite{Yuan} recently found an example showing that the assertion in Question~\ref{problem1} does not hold in general.  

\begin{theorem}[Yuan~\cite{Yuan}]\label{Yuan}
Let $p\ge t+1\ge 4$ and $K_t^{p+1}$ denote the graph obtained from $K_t$ by replacing each edge of $K_t$ with a clique $K_{p+1}$. When $n$ is sufficiently large, then
\[f(n,K_t^{p+1})=\ex \big( n, K_t^{p+1} \big) +\binom{\binom{t-1}{2}}{2}.\]
\end{theorem}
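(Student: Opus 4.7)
For the \emph{lower bound}, the natural start is to color an extremal $K_t^{p+1}$-free graph $G_0$ red and take its complement as blue: every red edge is then vacuously NIM, giving the base term $\ex(n,K_t^{p+1})$. To gain the extra $\binom{\binom{t-1}{2}}{2}$ blue NIM edges, I would distinguish a small ``core'' set $S$ of $t-1$ vertices inside a single Tur\'an part of $G_0$, so that the $\binom{t-1}{2}$ edges among $S$ are blue, and then locally rearrange a few red/blue edges near $S$ in a way that produces $\binom{\binom{t-1}{2}}{2}$ new blue NIM edges without reducing the red NIM count. The count $\binom{\binom{t-1}{2}}{2}$ is motivated by the structure of $K_t^{p+1}$: any blue $K_t^{p+1}$ through one of the candidate blue edges would force its $K_t$-core to meet $S$ at the ``missing'' vertex, and pairs of edges of the residual $K_{t-1}$-core inside $S$ index exactly the blue edges that fail to extend to a blue $K_t^{p+1}$.

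For the \emph{upper bound}, let $c$ be any $2$-coloring with red/blue graphs $R,B$ and NIM-$K_t^{p+1}$ set $N$. After deleting $N$, every remaining edge lies in a monochromatic $K_t^{p+1}$. Since $\chi(K_t^{p+1})=p+1$, Erd\H{o}s--Stone--Simonovits gives $\ex(n,K_t^{p+1}) = (1-\tfrac{1}{p}+o(1))\binom{n}{2}$, and the Erd\H{o}s--Simonovits stability theorem forces one of $R,B$, say $R$, to agree with the Tur\'an graph $T(n,p)$ outside an $o(n^2)$-small symmetric difference. A bootstrap argument iterating local modifications that cannot decrease the NIM count then pushes all potential ``extra'' NIM edges into a defect set $D=D(t,p)$ of size depending only on $t$ and $p$.

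The remaining task is to count the extra NIM edges inside $D$ exactly. For each such edge $e=uv$, the condition that $e$ lies in no monochromatic $K_t^{p+1}$ in either color imposes strong local constraints on the red and blue neighborhoods of $u$ and $v$. A case analysis, driven by the observation that any $K_t^{p+1}$ through $e$ uses $e$ as one of the $\binom{t}{2}$ edges of its $K_t$-core, should injectively map the extra NIM edges into unordered pairs of core edges of a residual $K_{t-1}$, giving the bound $\binom{\binom{t-1}{2}}{2}$.

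\emph{Main obstacle.} The crucial difficulty is this last step: generic stability only recovers the leading term plus an $o(n^2)$ error. Extracting the sharp additive constant $\binom{\binom{t-1}{2}}{2}$ will require a rigidity statement for near-extremal $K_t^{p+1}$-free graphs beyond generic Erd\H{o}s--Simonovits stability, together with a careful injective pairing of the extra NIM edges with the $\binom{\binom{t-1}{2}}{2}$ pairs of core edges that parametrize them.
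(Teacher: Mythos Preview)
The paper does not contain a proof of this theorem. Theorem~\ref{Yuan} is a result of Yuan, cited from~\cite{Yuan} and stated here only as background and motivation (to show that Question~\ref{problem1} has a negative answer in general and to set up Conjecture~\ref{yuanconj}). There is therefore nothing in this paper to compare your proposal against.

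As for the proposal itself, it is a sketch rather than a proof, and you correctly identify the main obstacle: generic Erd\H{o}s--Simonovits stability only gives the Tur\'an asymptotic, not the exact additive constant $\binom{\binom{t-1}{2}}{2}$. Your lower-bound paragraph is vague at precisely the point that matters: you say you would ``locally rearrange a few red/blue edges near $S$'' to manufacture the extra NIM edges, but you do not specify which edges, nor do you verify that no blue $K_t^{p+1}$ is created and that no red NIM edge is lost. Similarly, on the upper-bound side, the claimed ``injective map from extra NIM edges into unordered pairs of core edges of a residual $K_{t-1}$'' is asserted but not defined; this is exactly the combinatorial heart of Yuan's argument, and without it the plan is a wish rather than a proof. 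If you want to complete this, you will need to consult Yuan's paper~\cite{Yuan} for the precise extremal construction and the exact stability analysis specific to $K_t^{p+1}$.
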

Based on this result, he conjectured the following.
\begin{conjecture}[Yuan~\cite{Yuan}] \label{yuanconj}
Let $H$ be any graph and $n$ be sufficiently large. Then there exists a constant $C=C(H)$ such that $f(n,H)=\ex(n,H)+C$.
\end{conjecture}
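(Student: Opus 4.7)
The plan for Yuan's conjecture is to establish matching upper and lower bounds of the form $\ex(n,H) + C(H)$. For the lower bound, I would generalize Yuan's construction for $K_t^{p+1}$: take a Tur\'an-extremal $H$-free graph as the red graph, contributing $\ex(n,H)$ NIM-edges, and then identify inside the blue (complementary) graph a small set of $C$ edges that lie in no blue copy of $H$. The heart of the argument is to show that such a gadget of size $C=C(H)$ always exists, depending only on $|V(H)|$; for graphs with pendants or tails a natural candidate is a blue twin of a pendant vertex, and for graphs like $K_t^{p+1}$ Yuan's own blue clique-complement construction provides the gadget.

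For the upper bound I would pursue a two-level stability argument. The starting observation is that the red-NIM edges and the blue-NIM edges individually form $H$-free subgraphs of $K_n$: an $H$-copy inside red-NIM would be monochromatic red, contradicting that its edges are NIM. This already gives the crude bound $|E(c,H)| \leq 2\ex(n,H)$. To sharpen it to $\ex(n,H)+C$, I would argue that red-NIM and blue-NIM cannot both be close to the Tur\'an-extremal graph $T$: if $|\text{red-NIM}|$ is close to $\ex(n,H)$, then by Erd\H{o}s--Simonovits stability red-NIM differs from $T$ by $o(n^2)$ edges, so $R$ essentially contains $T$ and $B$ essentially contains $K_n\setminus T$. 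But $K_n\setminus T$ is so rich in copies of $H$ that only $O_H(1)$ blue edges can avoid all of them, forcing $|\text{blue-NIM}| = O_H(1)$. The delicate task is quantifying exactly which $O_H(1)$ blue edges survive and controlling the $o(n^2)$ approximation error so it contributes only a bounded number of exceptional edges.

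The main obstacle is the degenerate (bipartite) regime. For non-bipartite $H$ the Tur\'an graph and its stability are standard, and the Keevash--Sudakov argument already gives $C=0$ when $H$ has a critical edge. For bipartite $H$ the Tur\'an problem itself is frequently unresolved, and even the available stability theorems (e.g.\ for even cycles or $K_{s,t}$ in Ma's range) describe the extremal family only up to $o(n^2)$ or $o(\ex(n,H))$ error --- far from the constant precision the conjecture demands. I would therefore first test the conjecture on bipartite $H$ possessing a reducible substructure such as a pendant or a tail, where a direct combinatorial argument in the spirit of the paper's Result~(2) bounds $f(n,H)-\ex(n,H)$ by a function of $|V(H)|$ alone, and then try to extend the method to bipartite $H$ with known stability. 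The likely source of a counterexample, and the most interesting case, is sparse bipartite $H$ with no reducible substructure, where $f(n,H)-\ex(n,H)$ may genuinely grow with $n$ and the conjecture has to be refined.
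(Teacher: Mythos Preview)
The statement you are attempting to prove is labelled \emph{Conjecture} in the paper, and the paper does not prove it; it is quoted as an open problem of Yuan. Consequently there is no ``paper's own proof'' to compare your proposal against. The paper's contribution toward the conjecture is only partial evidence: Theorem~\ref{Thm2} shows that bipartite graphs with a tail satisfy $f(n,H)\le \ex(n,H)+\binom{|A|-1}{2}$, hence obey the conjecture, and Theorem~\ref{Thm1} gives $C=0$ for spiders and double brooms.

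Your proposal is accordingly not a proof but a research outline, and you explicitly acknowledge this at the end (``The likely source of a counterexample\ldots the conjecture has to be refined''). A few concrete comments on the plan itself. Your starting observation that the red NIM-$H$ edges and the blue NIM-$H$ edges are each $H$-free is correct and is indeed used in the paper. However, the stability step you sketch does not go through as stated: Erd\H{o}s--Simonovits stability tells you that a nearly-extremal $H$-free graph is $o(n^2)$-close to a Tur\'an graph, but it does \emph{not} follow that the full red graph $R$ is close to a Tur\'an graph --- only that the red NIM-$H$ edges are. The remaining red edges (those that are in some monochromatic $H$) can be arbitrary, so you cannot conclude that $B$ is close to the complement of a Tur\'an graph. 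This gap is already visible in the non-bipartite case and is not specific to the degenerate regime you flag. For bipartite $H$, as you note, even the Tur\'an problem is typically open, so a stability-to-constant-error argument is out of reach with current tools; the paper's Theorem~\ref{Thm2} sidesteps this by exploiting the tail structure directly rather than invoking stability.
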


As mentioned earlier, the known results about the exact value of $f(n,H)$ require that $H$ contains a cycle.  For acyclic graphs and some other bipartite graphs, the situation is less clear.
Thus, in this paper, we will focus on this case. 
A \emph{spider} is the graph consisting of $t$ paths with one common end vertex such that all other vertices are distinct.  
A \emph{double broom} with parameters $t$, $s_1$ and $s_2$ is the graph consisting of a path with $t$ vertices with $s_1$ and $s_2$ distinct leaves appended to each of its respective end vertices.  

\begin{theorem}\label{Thm1}
Let $H$ a spider or a double broom with $s_1<s_2$ and $n$ be sufficiently large, we have
\[f(n,H)=\ex(n, H).\]
\end{theorem}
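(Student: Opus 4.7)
By inequality~(\ref{eq1.1}) we have $f(n,H)\ge \ex(n,H)$, so it suffices to prove the matching upper bound $f(n,H)\le \ex(n,H)$. The plan is to establish the stronger claim that the graph $G^{*}$ on $V(K_n)$ whose edges are the NIM-$H$ edges of a given $2$-edge-coloring $c$ is itself $H$-free; applying the definition of the Tur\'an number then finishes the proof.

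The key reduction is the following. Suppose toward contradiction that $G^{*}$ contains a copy $H_{0}$ of $H$. Then $H_{0}$ cannot be monochromatic under $c$: if, say, all edges of $H_{0}$ were red, then $H_{0}$ would itself be a red copy of $H$ covering every one of its own edges, contradicting the fact that each edge of $H_{0}$ is NIM-$H$. Thus $H_{0}$ is necessarily bichromatic. From here I would exploit the very restrictive structure of a spider or a double broom to produce a monochromatic copy of $H$ that shares an edge with $H_{0}$, contradicting that this edge is NIM-$H$.

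For a spider, let $v$ be the image of the unique center and consider the $t$ edges at $v$ in $H_{0}$: by bichromaticity they come in at least two colors. Pick a minority-color edge $e=vw$ at $v$, say $e$ is red. I would extend $e$ to a monochromatic red copy of $H$ by a greedy/rerouting argument. The Tur\'an number of a spider is $\Theta(n)$, so if $|E(G^{*})|>\ex(n,H)$ the set of NIM-$H$ edges has linear size while $K_n$ has $\Theta(n^{2})$ edges, whence one color class, say red, has density bounded away from $0$; a standard averaging/dependent-random-choice step then shows that a typical vertex of high red-degree admits many red paths of each prescribed length. Using this, one roots a red spider at $v$ with first leg starting with $e$ and grows the remaining $t-1$ legs one at a time, at each step choosing neighbors and continuations inside the red graph while avoiding the finitely many previously used vertices. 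For a double broom with parameters $t,s_{1},s_{2}$ the argument is parallel but carried out at the two broom-ends simultaneously; the hypothesis $s_{1}<s_{2}$ breaks the symmetry and lets us commit to extending the ``larger'' end in the majority color at a chosen broom vertex, absorbing any leaf-color defects on that side while the spine and the smaller broom-end are built by the same rerouting procedure.

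The main obstacle is the rerouting step: one must produce monochromatic paths of prescribed lengths from several designated vertices that are simultaneously vertex-disjoint and avoid a small already-used set. Two ingredients should make this work. First, a supersaturation/stability input for $\ex(n,H)$ for spiders and double brooms (both known tree classes for which the Erd\H{o}s--S\'os-type extremal number is well understood) guarantees that any color class with sufficiently many edges contains not only one but many copies of $H$ through a typical edge. Second, because NIM-$H$ edges are $O(n)$, almost every edge of $K_n$ lies in a monochromatic copy of $H$, which, through a density argument on the bipartite graph between $v$ and its large monochromatic neighborhood, supplies enough flexibility to carry out the path extensions while avoiding a bounded number of forbidden vertices. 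The double broom case is slightly more delicate because two high-degree endpoints have to be extended compatibly, and this is precisely where the strict inequality $s_{1}<s_{2}$ enters the argument.
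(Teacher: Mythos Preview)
Your proposal is a high-level plan rather than a proof, and it has a genuine gap at the rerouting step. You correctly observe that any copy $H_0$ of $H$ in $G^{*}$ must be bichromatic, and you want to extend some edge $e=vw$ of $H_0$ to a monochromatic copy of $H$. For this you appeal to global density: since one color class, say red, has $\Theta(n^{2})$ edges, ``a typical vertex of high red-degree admits many red paths of each prescribed length.'' The difficulty is that the center $v$ of the spider (or the two broom ends in the double-broom case) is \emph{not} a vertex you are free to choose by averaging: it is determined by $H_0$. Nothing in your argument prevents $v$ from having red degree bounded by a constant independent of $n$; in that case you cannot attach $t-1$ further red legs at $v$, and neither dependent random choice nor supersaturation for the red graph as a whole helps, because the obstruction is local at $v$. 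The same problem arises simultaneously at both broom endpoints. Your invocation of ``stability for $\ex(n,H)$'' does not address this either, since the red graph is not close to extremal---it is simply half of $K_n$.

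The paper circumvents this by a structural decomposition that \emph{forces} the relevant vertices to sit where the needed monochromatic structure is guaranteed. Using Ramsey, one finds a large blue clique and then takes $Y$ maximal with the property that every vertex of $Y$ has blue degree at least $k$ in $Y$; set $X=V(K_n)\setminus Y$. Every blue edge touching $Y$ then extends to a blue copy of $H$, so all blue NIM-$H$ edges lie inside $X$, and by maximality every $u\in X$ has at most $k-1$ blue neighbours in $Y$, hence a huge red neighbourhood there. When $|X|\ge\lfloor k/2\rfloor+1$ one bounds the red and blue NIM edges separately by two Tur\'an numbers. When $|X|\le\lfloor k/2\rfloor$, any bichromatic $H_0\subseteq G^{*}$ must have its blue edges inside $X$, so the very vertices you need to reroute from are automatically in $X$, where the red complete bipartite structure to $Y$ is available; an explicit branch-by-branch replacement then produces a red $H$ through an original NIM edge. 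The double-broom case requires a further subcase analysis on $|X|$ versus $\lfloor t/2\rfloor$ and the parity of $t$, and the hypothesis $s_1<s_2$ enters not as a vague symmetry-breaker but in a concrete inequality (ensuring $\tfrac{t+2s_1-2}{2}\,n\le\ex(n,H)$) needed to close the count. Your outline has no analogue of this localization, and without it the rerouting step cannot be carried out.
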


A \emph{tail} in a (not necessary acyclic) graph $H$  is a path $P_3=v_0v_1v_2$ such that $v_2$ is only adjacent to $v_1$ and $v_1$ is only adjacent to $v_0$ and $v_2$.
\begin{theorem}\label{Thm2}
Let $H=(A,B,E)$ be a bipartite graph containing a tail and $|A|\le |B|$. When $n$ is sufficiently large, we have 
\begin{equation}\label{eq1}
f(n,H)\le \ex(n,H)+\binom{|A|-1}{2}.
\end{equation}
Furthermore, the upper bound is tight. 
\end{theorem}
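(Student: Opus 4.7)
Let $c$ be any $2$-edge-coloring of $K_n$ with NIM-$H$ set $F$, and decompose $F=G_R^{\ast}\sqcup G_B^{\ast}$ by colour. Both $G_R^{\ast}$ and $G_B^{\ast}$ are $H$-free: a monochromatic copy of $H$ lying inside $G_R^{\ast}$ would be a red $H$ containing a NIM-$H$ edge, contradicting its NIM status. Hence each has at most $\ex(n,H)$ edges. To improve the trivial bound $|F|\le 2\ex(n,H)$, I would use the tail $v_0v_1v_2$ of $H$. Set $H_1:=H-v_2$; if $uw$ is a red NIM edge of $F$ and $G_R$ embeds $H_1$ with $v_1\mapsto u$ via an embedding whose image avoids $w$, then extending by $v_2\mapsto w$ yields a red copy of $H$ through $uw$, a contradiction. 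So, calling $u$ \emph{red-dense} when $G_R$ admits such an embedding, a red-dense vertex has red NIM-degree at most $|V(H)|-2$, with the symmetric statement for blue.

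\smallskip

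The central structural step is to show that the set $S$ of vertices which are \emph{neither} red-dense \emph{nor} blue-dense has size at most $|A|-1$. Heuristically, a vertex $u\notin S$ witnesses an $H_1$-embedding in one colour with $v_1$ placed at $u$; failing on both sides requires the $A$-side of $H_1$ to be systematically absent around $u$, and such a deficiency can occur at no more than $|A|-1$ vertices (the vertex $v_1$ itself accounts for the $-1$). Granting this, and taking without loss of generality that $G_R$ is the denser colour class so that $|E(G_R)|\ge \binom{n}{2}/2 \gg \ex(n,H)$ for bipartite $H$, a supersaturation argument forces almost every vertex to be red-dense, hence to have bounded red NIM-degree; what remains beyond the $H$-free budget $\ex(n,H)$ for $G_R^{\ast}$ is a pile of blue NIM edges that, after careful case analysis, must live inside $S$, contributing at most $\binom{|S|}{2}\le\binom{|A|-1}{2}$. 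Combining these yields $|F|\le \ex(n,H)+\binom{|A|-1}{2}$.

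\smallskip

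For the lower bound construction, I would fix $S\subseteq V(K_n)$ with $|S|=|A|-1$ and take an extremal $H$-free graph $G_0$ on $n$ vertices for which $S$ is independent (such $G_0$ are available for bipartite $H$ with a tail: for instance, for trees the extremal graphs are disjoint small components and one places $S$ across different components). Colour $G_0$ red and all other edges of $K_n$ blue. Then $G_R=G_0$ is $H$-free, so the $\ex(n,H)$ edges of $G_0$ are all NIM-$H$; and any putative blue copy of $H$ through an edge $s_1s_2$ of $K_S$ must place the $A$-side of $H$ partially outside $S$ (since $|A|>|S|$), where the tail together with $|A|\le|B|$ forces the embedding to use a red $G_0$-edge, a contradiction. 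Thus all $\binom{|A|-1}{2}$ edges of $K_S$ are blue NIM-$H$, giving $\ex(n,H)+\binom{|A|-1}{2}$ NIM edges in total. The main obstacle in the upper bound is the sharp structural claim $|S|\le|A|-1$ combined with the refinement that blue NIM edges outside $S$ are absorbable. Because bipartite $H$ satisfies $\ex(n,H)=o(n^2)$, standard Tur\'an-type stability is unavailable, and I anticipate a delicate inductive argument on $n$ --- at each step removing either a vertex of low $(G_R^{\ast}\cup G_B^{\ast})$-degree or one identifiable with $S$ --- while tracking the additive constant $\binom{|A|-1}{2}$ precisely.
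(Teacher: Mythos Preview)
Your proposal has genuine gaps in both the upper and lower bound arguments, and the overall strategy diverges from what the paper actually does.

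\textbf{Upper bound.} Your ``central structural step'' that the set $S$ of vertices which are neither red-dense nor blue-dense satisfies $|S|\le |A|-1$ is never proved; you yourself call the justification heuristic and then close by saying you ``anticipate a delicate inductive argument.'' That is precisely the whole difficulty. The paper bypasses this entirely. Since $n$ is large, Ramsey gives a blue clique $K_t$ with $t\ge v(H)$; one then partitions $V(K_n)\setminus V(K_t)$ into $X$ (all-red to $K_t$) and $Y$ (has a blue neighbour in $K_t$). Two short claims show (i) every blue NIM-$H$ edge lies inside $X$, and (ii) if $|X|\ge |A|$ then no red edge between $X$ and $V(K_t)\cup Y$ is NIM-$H$. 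A three-case analysis on $|X|$ (namely $|X|\ge |A|+1$, $|X|=|A|$, $|X|\le |A|-1$) then finishes the bound, with the extra $\binom{|A|-1}{2}$ arising only in the last two cases as the blue NIM edges confined to $X$. No supersaturation, no induction on $n$, no density dichotomy is needed; the large monochromatic clique does all the structural work your set $S$ was meant to do.

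\textbf{Lower bound.} Your construction does not establish tightness. If $G_0$ is an extremal $H$-free graph and you colour its complement blue, then the blue graph has $\binom{n}{2}-\ex(n,H)$ edges, which for bipartite $H$ is almost all of $K_n$. There is no reason a blue copy of $H$ cannot pass through an edge of $K_S$; your sentence ``the tail together with $|A|\le|B|$ forces the embedding to use a red $G_0$-edge'' is simply false, since the blue complement is nearly complete and $H$ embeds freely in it. More fundamentally, the paper does \emph{not} claim the bound is tight for every bipartite $H$ with a tail. Tightness is exhibited only for carefully chosen disconnected $H=H_1\cup H_2$, where each $H_i$ is a balanced tree on $2a$ vertices (so $|A|=2a$), $H$ has no perfect matching, and the Erd\H{o}s--S\'os conjecture holds for each component. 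For such $H$, Bushaw--Kettle gives $\ex(n,H)=(2a-1)(n-2a+1)$, and the tight colouring takes $|X|=2a-1$, colours $K[X,Y]$ red and the rest blue: the red $K_{2a-1,n-2a+1}$ is $H$-free by a counting of colour classes, while the blue $K_{2a-1}$ is too small to host any component of $H$, so its $\binom{2a-1}{2}$ edges are blue NIM-$H$.
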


\begin{remark}
In Theorem~\ref{Thm2}, there are many bipartite graphs $H$ such that $f(n,H)$ achieves an upper bound greater than $\ex(n,H)$. 
This implies that even for the bipartite case, the answer to Question~\ref{problem1} can be negative.
However, the graphs from Theorem~\ref{Thm2} satisfy Conjecture~\ref{yuanconj}.
\end{remark}

\vskip 3mm

We will also consider the case of edge colorings with $3$ or more colors.
Let $f_k(n,H)$ be the maximum number of edges not contained in any monochromatic copy of $H$ in a $k$-coloring of the edges of $K_n$. 
Thus, $f_2(n,H)=f(n,H)$. 
It appears likely that for $k \ge 3$, the function $f_k(n,H)$ has different behavior for bipartite graphs and non-bipartite graphs.
For non-bipartite graphs, one can see that $f_k(n,H)\not= (k-1)\ex(n,H)$ since $(k-1)\ex(n,H)\ge \binom{n}{2}$.

For a tree $T$, Ma~\cite{Ma} constructed a lower bound by taking random overlays of $k-1$ copies of extremal $T$-free graphs, and the construction implies $f_k(n,T)\ge (k-1-o(1))\ex(n,T)$. 
Liu, Pikhurko and Sharifzadeh~\cite{Liu} showed that this lower
bound is asymptotically correct.
\begin{theorem}[Liu, Pikhurko,
Sharifzadeh~\cite{Liu}]
Let $T$ be a tree with $h$ vertices. Then there exists a constant $C(k,h)$ such that for all sufficiently large $n$, we have
\[ \big| f_{k}(n,T)-(k-1)\ex(n,T) \big|\le C(k,h).\]
\end{theorem}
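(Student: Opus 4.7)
My plan is to prove the theorem in two parts matching to within an additive constant, using a Ma-style random construction for the lower bound and a peeling-plus-stability argument for the upper bound.

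The lower bound $f_k(n,T)\ge(k-1)\ex(n,T)-O(1)$ comes from a refinement of Ma's random overlay. I would pick $k-1$ independent uniformly random extremal $T$-free graphs $G_1,\dots,G_{k-1}$ on $[n]$, color each edge of $K_n$ with the smallest index $i$ for which it lies in $G_i$, and use color $k$ for leftover edges. The first $k-1$ color classes are $T$-free, so every one of their edges is NIM, and an inclusion--exclusion computation with $p:=\ex(n,T)/\binom{n}{2}=\Theta(1/n)$ gives expected NIM count $\binom{n}{2}[1-(1-p)^{k-1}]=(k-1)\ex(n,T)-O(1)$. The leftover color $k$ has minimum degree at least $n-1-(k-1)(h-2)\ge h-1$ for large $n$, so the greedy embedding lemma below forces $|M_k|=0$. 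A deterministic instantiation attaining the expected value yields the lower bound.

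For the upper bound, the starting observation is that for every color $i$ the NIM set $M_i$ is itself $T$-free: a copy of $T$ inside $M_i\subseteq G_i$ would put each of its edges into a monochromatic $T$ in $G_i$, contradicting their NIM status. Thus $|M_i|\le\ex(n,T)$ and trivially $|M|\le k\,\ex(n,T)$. To save one factor of $\ex(n,T)$ I would invoke the greedy embedding lemma: if $G$ has $\delta(G)\ge h-1$, then for every edge $uv\in G$ and every tree $T$ on $h$ vertices, mapping $uv$ onto any edge of $T$ and extending vertex by vertex produces a copy of $T$ through $uv$, since each parent image has at least $h-1$ neighbors of which at most $h-2$ have already been used. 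Iteratively peeling vertices of current color-$i$-degree below $h-1$ from $G_i$ produces the $(h-1)$-core $G_i^*$, inside which every edge lies in a copy of $T$; hence $M_i\subseteq G_i\setminus G_i^*$, and $|G_i\setminus G_i^*|\le(h-2)\,|P_i|$ where $P_i:=V\setminus V(G_i^*)$ is the peeled set. The identity $\sum_i d_i(v)=n-1$ rules out $d_i(v)\le h-2$ for every $i$ once $n>k(h-2)+1$, so on the ``low initial degree'' set $H_i^-:=\{v:d_i(v)\le h-2\}$ one has $\sum_i|H_i^-|\le(k-1)n$; this is the starting point for bounding $\sum_i|P_i|$.

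The main obstacle, and the source of the constant $C(k,h)$, is two-fold: first, to show that $\sum_i|P_i|\le(k-1)n+O(1)$ rather than the trivial $kn$ (which together with the peeling estimate would give $|M|\le(h-2)(k-1)n\approx 2(k-1)\ex(n,T)$), and second, to close the remaining factor of two between this crude bound and the target $(k-1)\ex(n,T)$. For the first step I would argue that any vertex $v$ lying in $P_i$ for all $k$ colors must have very constrained color-$i$ structure in every color, and that the degree-sum identity together with a pigeonhole analysis reduces the number of such pathological vertices to an absolute constant, yielding $\sum_i|P_i|\le(k-1)n+O(1)$. For the second step I would use an Erd\H{o}s--S\'os-type stability argument: a NIM edge of color $i$ with only one peeled endpoint (where the other endpoint $v$ lies in the $(h-1)$-core $G_i^*$) forces $v$'s local color-$i$-structure to be one of finitely many rigid configurations, because otherwise one could place a leaf of $T$ at the peeled endpoint and extend $T\setminus\ell$ greedily into $G_i^*$ using its minimum-degree property to produce a copy of $T$ through $uv$. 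Bounding the number of such exceptional NIM edges, summed over all colors, by a constant $C(k,h)$ independent of $n$ is the heart of the argument; combined with the handshake bound $|M_i|\le(h-2)|P_i|/2$ that holds once both endpoints of every NIM edge are peeled (up to the $O(1)$ exceptions), this yields $|M|\le(k-1)\ex(n,T)+C(k,h)$ and matches the lower bound. The hardest part will be precisely this stability step, which must go beyond the first neighborhood of $v$ (simple greedy embedding from $v$ alone can fail when $v$'s neighbors themselves have small color-$i$-degree) and control the number of exceptional local configurations by a constant depending only on $k$ and $h$.
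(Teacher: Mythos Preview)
The paper does not contain a proof of this statement. The theorem is attributed to Liu, Pikhurko and Sharifzadeh and is quoted in the introduction purely as background; the paper's own contributions are Theorems~\ref{Thm1}--\ref{Thm3}, and no argument for the cited result is supplied anywhere in the text. There is therefore nothing to compare your proposal against.

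On the proposal itself: your lower-bound sketch (Ma's random overlay) is standard and fine. For the upper bound, however, two of the steps you flag as ``the main obstacle'' are not merely hard but are in fact where your outline currently has genuine gaps. First, the passage from $\sum_i|H_i^-|\le(k-1)n$ to $\sum_i|P_i|\le(k-1)n+O(1)$ is not justified: peeling cascades, so a vertex can land in $P_i$ even if its initial colour-$i$ degree is large (its neighbours get peeled first), and your pigeonhole remark about vertices lying in all $P_i$ does not by itself control this. Second, the ``stability'' step you describe---arguing that an NIM edge with one endpoint in the $(h-1)$-core forces a rigid local configuration, and that only $O_{k,h}(1)$ such configurations can occur---is precisely the substantive content of the Liu--Pikhurko--Sharifzadeh argument, and what you have written is a statement of intent rather than a mechanism. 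Without an actual structural lemma here (and the Erd\H{o}s--S\'os conjecture is not available in general), the factor-of-two gap between $(h-2)\sum_i|P_i|$ and $(k-1)\ex(n,T)$ remains open in your sketch.
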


For more general bipartite graph $H$, Ma~\cite{Ma} wrote ``it may be reasonable to ask if $f_k(n,H)=(k-1)\ex(n,H)$ holds for sufficiently large $n$".  However, this is not true for disconnected bipartite graphs. 
Liu, Pikhurko and Sharifzadeh~\cite{Liu} gave an example and showed $f_k(n,2K_2)={(k-1)}\ex(n,2K_2)-\binom{k-1}{2}$. Based on this example, Liu, Pikhurko and Sharifzadeh~\cite{Liu} asked the following question.
\begin{question}[Liu, Pikhurko, Sharifzadeh~\cite{Liu}] \label{problem2}
Is it true that $f_k(n, T)=(k-1)\ex(n, T)$ for any tree $T$ and sufficiently large $n$?
\end{question}

Our third result concerns the case when $T$ is a path with an even number of vertices and yields a negative answer to Question~\ref{problem2}.
\begin{theorem}\label{Thm3}
Let $k \ge 1$ and $n\ge (2k)^{2k^2}$ be integers. We have
\[f_{2k}(n,P_{2k})\le (2k-1)\ex(n, P_{2k})+(k-1)\binom{2k-1}{2}.\]
Furthermore, equality holds when $2k-1$ is a prime and $n\in \big\{ a(2k-1)+{(k-1)}, \; a{(2k-1)}+k \big\}$.
\end{theorem}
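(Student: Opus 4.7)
Fix a $2k$-edge-coloring $c$ of $K_n$; for each color $i \in [2k]$ let $G_i$ be the color class and $N_i \subseteq G_i$ the set of NIM-$P_{2k}$ edges of color $i$. The starting observation is that $N_i$ is itself a $P_{2k}$-free subgraph of $K_n$: any $P_{2k}$ contained in $N_i$ would be a monochromatic $P_{2k}$ of color $i$ covering each of its supposedly NIM edges, a contradiction. Thus by Erd\H{o}s--Gallai (in the sharp Faudree--Schelp form $\ex(n, P_{2k}) = a\binom{2k-1}{2} + \binom{r}{2}$ for $n = a(2k-1) + r$), I obtain $|N_i| \leq \ex(n, P_{2k})$ and hence the trivial bound $\sum_i |N_i| \leq 2k \cdot \ex(n, P_{2k})$. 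The content of the theorem is to shave $\ex(n, P_{2k}) - (k-1)\binom{2k-1}{2} = \Theta(n)$ off this trivial bound.

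The key structural input I plan to use is that each vertex lies in at most $2k-1$ \emph{NIM-cliques}, where a NIM-clique is a connected component of some $N_i$ isomorphic to $K_{2k-1}$. The proof proceeds by contradiction: suppose $v$ lies in NIM-cliques $C_1, \ldots, C_{2k}$ of all $2k$ colors. For each $i$, the $K_{2k-1}$-component $C_i \subseteq N_i$ forces $\deg_{G_i}(v) = 2k-2$ -- indeed any additional color-$i$ edge $vy$ with $y \notin C_i$ would extend $C_i$ to a monochromatic $P_{2k}$ in color $i$ through some NIM edge $vu \in C_i$, contradicting NIM-ness of $vu$. Summing, $n-1 = \sum_i \deg_{G_i}(v) = 2k(2k-2) = 4k^2 - 4k$, which contradicts $n \geq (2k)^{2k^2}$. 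Double-counting vertex--NIM-clique incidences now yields $(2k-1)\sum_i t_i \leq (2k-1)n$, where $t_i$ is the number of NIM-cliques in color $i$; so $\sum_i t_i \leq n$, contributing at most $n\binom{2k-1}{2}$ NIM edges. A companion stability argument then handles the non-$K_{2k-1}$ components of $N_i$: any non-clique component of $N_i$ forces a corresponding deficit in the number of NIM-cliques (since its vertices cannot simultaneously populate the extremal structure), so the combined sum satisfies $\sum_i |N_i| \leq n\binom{2k-1}{2} + (2k-1)\binom{k-1}{2}$, which rearranges to the claimed upper bound.

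For the lower bound with $q = 2k-1$ prime and $n = aq + r$, $r \in \{k-1, k\}$: partition $V(K_n) = B_0 \sqcup B_1 \sqcup \cdots \sqcup B_a$ with $|B_0| = r$ and $|B_j| = q$ for $j \geq 1$, identifying each $B_j$ ($j \geq 1$) with $\{j\} \times \mathbb{F}_q$. Colour all intra-group edges by color $2k$, producing a disjoint union of $a$ cliques $K_q$ plus one $K_r$ (all $P_{2k}$-free) and hence exactly $\ex(n, P_{2k})$ NIM edges of color $2k$. Colour each inter-group edge $\{(j,x), (j',y)\}$ by one of the colors $1, \ldots, 2k-1$ according to a Cayley-type rule depending on $x - y \in \mathbb{F}_q$ and the pair $(j, j')$, coming from an affine-plane-like resolvable decomposition. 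The primality of $q$ guarantees that each of these $2k-1$ colors further decomposes into disjoint $K_q$-transversal cliques through $B_1, \ldots, B_a$ (each NIM since $|V(K_q)| < 2k$), and the arithmetic of the residue $r \in \{k-1, k\}$ furnishes exactly the extra $(k-1)\binom{2k-1}{2}$ NIM edges on top of the $(2k-1)\ex(n, P_{2k})$ baseline. The main obstacle is the non-clique stability estimate in the upper bound; the enormous threshold $n \geq (2k)^{2k^2}$ strongly suggests the argument proceeds by an iterative clean-up followed by a stability step, with the concrete bound likely far from tight in $n$.
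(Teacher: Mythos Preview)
Your upper-bound sketch has a genuine gap. The observation that no vertex can lie in NIM-cliques of all $2k$ colors is correct, and the double count $(2k-1)\sum_i t_i \le (2k-1)n$ follows. But this yields only $\sum_i t_i\binom{2k-1}{2}\le n\binom{2k-1}{2}=(2k-1)(k-1)n$, which is essentially the \emph{trivial} bound $2k\cdot\ex(n,P_{2k})$ (indeed $(2k-1)\ex(n,P_{2k})\le (2k-1)(k-1)n$ as well). The entire saving of order $n$ that the theorem requires is deferred to your unstated ``companion stability argument,'' and nothing in the NIM-clique count helps there: if every $N_i$ happens to have no $K_{2k-1}$ component at all, your clique analysis is vacuous and the stability step must prove the theorem from scratch. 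Moreover, your asserted final inequality $\sum_i|N_i|\le n\binom{2k-1}{2}+(2k-1)\binom{k-1}{2}$ only coincides with the theorem's bound when $n\bmod(2k-1)\in\{k-1,k\}$; for other residues it is a strictly stronger claim that you would have to justify separately. The paper's proof takes a completely different route: it locates a colour (say $c_{2k}$) with at least $\binom{n}{2}/(2k)$ edges, hence a component on $|X|\ge n/(2k)$ vertices, so that all $c_{2k}$ NIM-edges lie in $Y=V\setminus X$. If $|Y|\le(k-1)(2k-1)$ we are done; otherwise one iteratively pigeonholes colour-patterns of $X$ on small subsets of $Y$ to extract monochromatic complete bipartite graphs $K_n[X_j,Y_j]$ with $|X_j|=2k$, $|Y_j|=k$. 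Each such piece forces the corresponding colour's NIM-edges off at least $3k$ vertices, and an arithmetic lemma comparing $\ex(n_1,P_{2k})+\ex(n_2,P_{2k})$ with $\ex(n_1-c,P_{2k})+\ex(n_2+c+2k,P_{2k})$ converts these vertex deletions into the required bound. The pigeonhole over colour-vectors of length $(k-1)(2k-1)+1$ is precisely what produces the threshold $n\ge(2k)^{2k^2}$.

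Your lower-bound construction cannot work as written. You colour all intra-block edges with $c_{2k}$ (giving $\ex(n,P_{2k})$ NIM-edges there, fine) and distribute \emph{all} inter-block edges among colours $1,\dots,2k-1$. But there are $\binom{n}{2}-\ex(n,P_{2k})=\Theta(n^2)$ inter-block edges, so some colour $c_j$ receives $\Theta(n^2)\gg\ex(n,P_{2k})$ edges and hence contains a $P_{2k}$; your ``transversal cliques'' cannot be vertex-disjoint and still cover all those edges, and if they share vertices they create long monochromatic paths. The paper's construction avoids this by confining the affine-plane decomposition to a \emph{fixed} subset $U$ of only $(2k-1)^2$ vertices: the $2k-1$ parallel classes of lines in $AG(2,2k-1)$ give colours $c_1,\dots,c_{2k-1}$ inside $U$, one line of each class is modified by recolouring a $K_k$ with $c_{2k}$ and joining its complementary $K_{k-1}$ to all of $W=V\setminus U$, so that each $c_j$-subgraph becomes $(2k-2)K_{2k-1}\cup(K_{k-1}+\overline{K}_{n-(2k-2)(2k-1)-(k-1)})$, an extremal $P_{2k}$-free graph for $n\equiv k-1,k\pmod{2k-1}$. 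The remaining row-cliques $U_{k+1},\dots,U_{2k-1}$ are isolated $c_{2k}$-cliques and supply the extra $(k-1)\binom{2k-1}{2}$ NIM-edges.
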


\noindent \textbf{Notation and organization.} For a given graph $G$, we use $e(G)$ to denote the number of edges of $G$. For a subset of vertices $X$, let $G[X]$ denote the subgraph induced by $X$ and $G-X$ denote the subgraph induced by $V(G)\setminus X$. For two disjoint subset $X,Y$, let $G[X,Y]$ denote the bipartite subgraph of $G$ consisting of the edges of $G$ with one end vertex in $X$ and the other in~$Y$. In a red-blue edge-colored complete graph $K_n$, we say that $u$ is a red (or blue) neighbor of $v$ if the edge $uv$ is red (or blue). For a set $X$ of vertices, let $N_r(v,X)$ and $N_b(v,X)$ denote the red and blue neighbors of $v$ in $X$, respectively.  Let $d_r(v,X)= \big| N_r(v,X) \big|$ and $d_b(v,X)= \big| N_b(v,X) \big|$.  If $X=V(K_n)$, then we simply  write  $d_r(v)$ and $d_b(v)$. For two graphs $G$ and $H$, we use $G\cup H$ to denote the disjoint union of $G$ and $H$. Let $G+H$ be the graph obtained from $G\cup H$ by adding all edges with one end vertex in $V(G)$ and one end vertex in $V(H)$.

The rest of the paper is organized as follows. In Sections~\ref{sec2} and~\ref{sec3},  we study the function $f(n,H)$ and prove Theorems~\ref{Thm1} and~\ref{Thm2}, respectively. In Section~\ref{sec4}, we study the general function $f_k(n,H)$ and prove Theorem~\ref{Thm3}.

\section{Proof of Theorem \ref{Thm1}} \label{sec2}

Let $H$ be a spider or a double broom on~$k$ vertices and $c$ be a red-blue edge-coloring of $K_n$ with $|E(c,H)|$ being maximum.
If $E(c,H)$ contains no $H$, then  
\[f(n,H)= \big| E(c,H) \big| \le \ex(n,H),\]
and we are done.
Hence we may assume there is a non-monochromatic copy of $H$ in $E(c,H)$.

Since we can take $n$ to be larger than the Ramsey number $R(k^2, k^2)$, it follows, without loss of generality, that $K_n$ contains a blue clique $K$ of size at least $k^2$.  
We partition $V(K_n)$ into two parts $X$ and $Y$ such that $Y$ is maximal with the property that any vertex $v$ in~$Y$ has $d_b(v,Y)\ge k$ and $X$ consists of the remaining vertices.
Note that the large blue clique $K$ is contained in $Y$, and hence $|Y|\ge k^2$.
Since each vertex in $Y$ has blue degree at least $k$ in $Y$, every blue edge in~$Y$ or between $X$ and $Y$ can be extended to  a blue copy of $H$. 
Hence, all blue NIM-$H$ edges are contained in $X$ and $|X|\ge 2$.

For each vertex $u$ in $X$, we have $d_b(u,Y)\le (k-1)$. Thus for each subset $X'$ of $X$, the subset $Y'=Y\setminus N_b(X',Y)$ is such that $K_n[X',Y']$ is a red complete bipartite graph and $|Y'|\ge |Y|-(k-1)|X'|$. We call $Y'$ the corresponding subset of $X'$.

First assume $|X|\ge \left\lfloor{\frac{k}{2}} \right\rfloor+1$.  For each red edge $uv$ contained in $X$ or between $X$ and $Y$, we can find a subset $X'\subseteq X$ of size $\left\lfloor{\frac{k}{2}} \right\rfloor$ that contains exactly one of $u$ and $v$. Using the corresponding subset $Y'$ of $X'$, this red edge $uv$ can be extended to a red copy of $H$. Hence all red NIM-$H$ edges are contained in $Y$ and
\[ \big| E(c,H) \big| \le \ex \big( |Y|,H \big) +\ex \big( |X|,H \big) \le \ex(n,H).\]

Therefore, in the rest of the proof, we will assume $|X|\le \left\lfloor{\frac{k}{2}} \right\rfloor$. Furthermore, each red edge in~$Y$ is NIM-$H$, otherwise we replace the color of this edge by blue and since $E(c,H)$ is maximum, it has no changes.

Next we distinguish two cases based on whether $H$ is a spider or a double broom.

\vskip 3mm

\noindent \textbf{The proof when $H$ is a spider.} Let $H$ be a spider consisting of $t$ paths with a common initial vertex~$v_0$. 
We call each path starting from $v_0$ a branch, and we assume that the lengths of these~$t$ branches are $\ell_1,\ldots,\ell_t$ such that $v(H)=k=1+\sum_{i=1}^{t}\ell_i$.

Now we choose a copy of $H$ from $E(c,H)$ and denote it by $H'$.
Let $X'=X\cap V(H')$. 
Since~$H'$ contains blue edges and all NIM-$H$ blue edges are contained in $X$, we have $X'\not=\emptyset$ and the corresponding subset~$Y'$ is of size at least 
\[|Y|-(k-1)|X'|\ge k.\]

For every branch of $H'$, we apply the following method to replace all blue edges with red edges. 
First, every branch consisting entirely of blue edges is replaced by a red path of the same length in $K_n[X',Y']$.
This can be done since $K_n[X',Y']$ is a complete bipartite graph consisting of only red edges and $Y'$ is large enough.
For any remaining branch $v_0v_1\ldots v_{\ell_m}$, let $v_iv_{i+1}$ be the first red edge on this branch, i.e., every edge in the path $v_0v_1\ldots v_i$ is blue.
If $i$ is even, we replace the path $v_{2j}v_{2j+1}v_{2j+2}$ by a new red path $v_{2j}y_jv_{2j+2}$ with a distinct $y_j\in Y'$ for all $0\le j\le \frac{i}{2}-1$.
If $i$ is odd, we replace the path $v_{2j}v_{2j+1}v_{2j+2}$  by a new red path $v_{2j}y_jv_{2j+2}$ with a distinct $y_j\in Y'$ for all $0\le j\le \frac{i-1}{2}-1$ and replace the single edge $v_{i-1}v_i$ by a new red path $v_{i-1}y'v_i$ with a distinct $y'\in Y'$.
For all other blue edges after $v_iv_{i+1}$, we replace them by a new red $P_3$ with the middle vertices in $Y'$. Again, this can be done since $K_n[X',Y']$ is a complete bipartite graph consisting of only red edges and $Y'$ is large enough.

After this, the original branch becomes a longer red path and we take the first segment of length $\ell_m$ as the new branch.
Note that this new branch still contains the original red edge $v_iv_{i+1}$ unless $i$ is odd and $i+1=\ell_m$.
Let $H''$ be the resulting copy of~$H$.

If $H''$ still contains one of the original red edges, then we have a monochromatic copy of~$H$, a contradiction since the original edges are NIM-$H$.
Otherwise every branch of $H'$ is either entirely blue or has even length and is such that only the final edge is red. 
However, then we have $|X|\ge |X'|\ge \left\lfloor{\frac{k}{2}}\right\rfloor+1$, a contradiction of our assumption that $|X| \le \left\lfloor{\frac{k}{2}} \right\rfloor$ (recall that the blue edges are in $X'$).
The proof is complete for spiders.

\vskip 3mm
\noindent \textbf{The proof when $H$ is a double broom.}
Let $H$ be a double broom with parameters $t$, $s_1$ and $s_2$ such that $k=t+s_1+s_2$ and $s_1< s_2$.  

First, assume that $t$ is odd and $|X|\ge \left\lfloor{\frac{t}{2}}\right\rfloor+1$. 
For a red edge $uv$ with $u\in X,~v\in Y$, there is a subset $X'\subseteq X$ of size $\frac{t+1}{2}$ containing $u$. 
Let $Y'$ be the corresponding subset for $X'$. 
Then there is a path $P_t$ in $K_n[X',Y']$ which starts from $u$ and ends at another vertex, say $w$ in $X'$, and avoids $v$.
Since $|Y'|\ge k^2-(k-1)\frac{t+1}{2}$, we can select additional red edges incident to $u$ and $w$, which together with the edge $uv$ represent the set of edges incident to the leaves of $H$.
It follows that $uv$ is not NIM-$H$.
Hence all red NIM-$H$ edges are contained in $X$ and $Y$, and we have
\[ \big| E(c,H) \big| \le \ex \big( n-|X|, H \big) +\binom{|X|}{2}\le \ex(n,H),\]
where the second inequality holds since $|X|\le \frac{k}{2}$.

Now assume that $t$ is even and $|X|\ge \left\lfloor{\frac{t}{2}}\right\rfloor+1$.
Let $Y_1=\{v\in Y:~ d_r(v,X)\ge 1 \}$ and $Y_2=Y \setminus Y_1$. 
Since each vertex in $X$ has at most $k-1$ blue neighbors in $Y$, we have $|Y_2|\le k-1$.  

Now we show that for each vertex $v \in Y_1$, there are at most $s_1+\frac{t}{2}-1$ NIM-$H$ edges incident to $v$. Suppose by way of contradiction that for a vertex $v\in Y_1$, there are at least $s_1+\frac{t}{2}$ red NIM-$H$ edges incident to $v$. 
By the definition of $Y_1$, there is a red edge $vu$ with $u\in X$. 
Let $X'=X$ and let $Y'\subset Y$ be the corresponding subset of $X'$. We extend the red edge $vu$ to a red path $P_t$ in such a way that: (1) one of the end vertex is $v$ and the other end vertex $w$ is in $X'$, (2) every second vertex of the path is in $X'$ and the remaining vertices of the path are in $Y'$, (3) there remain at least $s_1$ red NIM-$H$ edges incident to $v$ which are not vertices of the path. These conditions can be satisfied since $Y'$ is sufficiently large. 
Now at least $s_1$ red NIM-$H$ edges incident to $v$ are not covered by the vertices of the path, which we can view as leaf edges of $H$ incident to $v$. Select another $t$ red (but not necessarily NIM-$H$) edges incident to $w$ and to some vertices which have not been used yet.
Thus we found a red copy of $H$ containing at least one NIM-$H$ edge, a contradiction. 

Therefore, for each vertex $v \in Y_1$, there are at most $s_1+\frac{t}{2}-1$ NIM-$H$ edges incident to $v$.
All other NIM-$H$ edges are contained in $Y_2$ and $X$. Hence,
\begin{align*}
 \big| E(c,H) \big| \le& |Y_1| \left( s_1+\frac{t}{2}-1 \right)+\binom{|Y_2|}{2}+\binom{|X|}{2} \tag{1}\\
\le & \ex \big( |Y_1|, H \big) +\ex \big( |Y_2|,H \big) +\ex \big( |X|,H \big)\\
\le &\ex(n,H),  
\end{align*}
 where the second inequality holds since the coefficient of $|Y_1|$ satisfies $s_1+\frac{t}{2}-1<\frac{k-2}{2}$ and $|Y_2|\le k-1$, $|X|\le \frac{k}{2}$. Thus, we are done in the case $|X|\ge \left\lfloor{\frac{t}{2}} \right\rfloor+1$.

Finally, we consider the case when $|X|\le \left\lfloor{\frac{t}{2}} \right\rfloor$.
Since $|X|\ge 2$, we have $t\ge 4$.
Let  $Y_1=\{ {v\in Y:}~ d_r(v,X)\ge 2\}$  and $Y_2=Y\setminus Y_1$. 
Now we show that there is no red path of length $t-2|X|+1$ in $Y_1$. Suppose by way of contradiction that $P$ is a red path of length $t-2|X|+1$ in~$Y_1$. 
First, we extend $P$ to a red path of length $t-1$ using vertices in $X$ and the corresponding subset of $X$ in $Y$  such that the two end vertices of this longer path, say $u$ and $v$, are contained in $X$. 
Since each vertex in $X$ has red degree at least $|Y|-(k-1)$ in $Y$, we can find  $s_1$ new red neighbors of $u$ and $s_2$ new red neighbors of $v$ in $Y$ and view them as the leaf-edges of $H$. 
That is, we extended the red path $P$ to a red copy of $H$. However, as we assumed all red edges in $Y$ are NIM-$H$, we have a contradiction.

Now we show $|Y_2| \le s_1 -1$. Suppose by way of contradiction that  $|Y_2|\ge s_1$.
If there are two vertices $v_1,v_2$ in $Y_2$ such that $N_b(v_1, X)\cup N_b(v_2,X)=X$, then for any blue edge $u_1u_2$ in~$X$, we have that $v_1u_1u_2v_2$ or $v_1u_2u_1v_2$ is a blue path. 
Since $t\ge 4$ and all vertices in $Y$ have large blue degree in $Y$, this blue path can be extended to a blue copy of $H$.
Hence there are no blue NIM-$H$ edges, a contradiction. 
Thus by the definition of $Y_2$, there exists a vertex $w\in X$ such that $N_b(v,X)=X\setminus\{w\}$ for any $v\in Y_2$. 
Let $uu'$ be a blue NIM-$H$ edge in $X$ with $u\ne w$.
Using $uu'$ and $s_1$ blue edges between $u$ and $Y_2$, we can find a blue star with $s_1+1$ leaves.
By the definition of $Y$,  we can extend this blue star to a blue copy of $H$ using other vertices in $Y$, a contradiction.
Hence we have $|Y_2|\le s_1-1$. Furthermore, there are at most $|Y_2|$ red NIM-$H$ edges between $X$ and $Y_2$.

Therefore, we have
\begin{align*}
\big| E(c,H) \big| &\le \ex \big( |Y_1|, P_{t-2|X|+2} \big)+ |Y_1| \big( |Y_2|+|X| \big)+\binom{|Y_2|}{2}+\binom{|X|}{2}+|Y_2|\\
& \le \frac{t-2|X|}{2}|Y_1|+|Y_1| \big( |Y_2|+|X| \big) +\binom{|Y_2|}{2}+\binom{|X|}{2}+|Y_2|\\
&\le \frac{t+2(s_1-1)}{2} \big( n-(s_1-1)-|X| \big) +\binom{s_1}{2}+\binom{|X|}{2}\\
&\le \frac{t+2s_1-2}{2}n\le \ex(n,H), \tag{2}
\end{align*}
where the last inequality holds since $s_1<s_2$. The proof is complete. $\hfill\blacksquare$

\begin{remark}
One may note that in inequality $(1)$ and $(2)$, we need the condition $s_1<s_2$ to ensure that $\frac{t+2s_1-2}{2}n\le \ex(n,H)$. For the case $s_1=s_2$, these inequalities still show $f(n,H)\le \frac{k-2}{2}n$ but this does not imply $f(n,H)\le \ex(n,H)$ for all $n$.
With additional details, one could extend the proof to the case $s_1=s_2$. 
But this would make our proof more complicated, so we omit it.    
\end{remark}



\section{Proof of Theorem~\ref{Thm2}} \label{sec3}

We first construct some bipartite graphs which attain the upper bound in~\eqref{eq1}. 
Our idea comes from a theorem of Bushaw and Kettle~\cite{Kettle}. Before we present the detailed constructions, we recall some results which we will require.

It is well-known that $\ex(n,T)\le \frac{v(T)-2}{2}n$ when $T$ is a path or star. For a general tree $T$, this is the celebrated  Erd\H{o}s--S\'{o}s Conjecture.
\begin{conjecture}[Erd\H{o}s--S\'{o}s]
For a tree $T$, we have $\ex(n, T)\le \frac{v(T)-2}{2}n$.
\end{conjecture}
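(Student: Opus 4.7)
The statement is the celebrated Erdős–Sós conjecture from 1962, still open in full generality, so any honest proposal must acknowledge that a complete proof would be a major breakthrough. Rather than claim a solution, I will describe the strategy underlying every known partial result and pinpoint where it stalls, then indicate how the specific tree families relevant to the paper can be treated.

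The natural first move is to reduce to host graphs $G$ with $\delta(G)\ge \frac{v(T)-1}{2}$. If $G$ contains a vertex $v$ of smaller degree, then $G-v$ still has strictly more than $\frac{v(T)-2}{2}(n-1)$ edges, and induction on $n$ produces a copy of $T$ inside $G-v$. Under this minimum degree assumption one attempts a greedy embedding: root $T$ at a leaf and process its vertices in DFS order, mapping each new vertex to an unused neighbor of its already-embedded parent. Since every host vertex has at least $\frac{v(T)-1}{2}$ neighbors in $G$ and at most $v(T)-2$ of them can already be occupied, the step succeeds whenever the parent in $T$ has degree at most $\frac{v(T)}{2}$.

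The greedy argument breaks down precisely when $T$ contains a vertex of degree strictly larger than $\frac{v(T)-1}{2}$, in which case the available neighborhood of its host image may be fully consumed before all its children are placed. This interaction between a high-degree vertex of $T$ and a long branch requiring a deep embedding is the main obstacle and is what has kept the conjecture open for six decades. For the tree families the paper actually needs, the difficulty is avoidable: paths are settled by the Erdős–Gallai longest-path theorem, stars follow immediately from $\sum_v d(v)=2e(G)$, and spiders and double brooms can be handled by reordering the embedding so the high-degree center is processed last, after its leaves have been allocated to distinct unused vertices of large degree in $G$. For large $n$, the full conjecture follows from the Ajtai–Komlós–Simonovits–Szemerédi proof via the Regularity Lemma together with an absorbing method; promoting that machinery to a sharp bound valid for every $n$ is exactly the crux that no strategy has yet overcome. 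I expect the paper to invoke the bound only in those restricted regimes (paths, stars, spiders, brooms) where the classical partial results suffice, rather than attempting to prove the conjecture itself.
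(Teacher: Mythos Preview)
The paper does not prove this statement at all: it is stated there exactly as here, as the Erd\H{o}s--S\'{o}s \emph{Conjecture}, with no attempted proof. The paper only quotes McLennan's partial result (trees of diameter at most four) and then invokes the conjecture as a \emph{hypothesis} inside the Bushaw--Kettle theorem. So your instinct is correct: there is nothing to compare against, and your expectation that the paper uses the bound only for restricted tree families is accurate. A small correction on which families: the paper's application (in the construction for Theorem~\ref{Thm2}) needs the bound for double stars $S_{a-1,a-1}$ (covered by McLennan) and for paths (Erd\H{o}s--Gallai), not for spiders or double brooms --- those appear in Theorem~\ref{Thm1}, whose proof does not rely on Erd\H{o}s--S\'{o}s.

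One technical slip in your sketch is worth flagging even though you are not claiming a proof. After the minimum-degree reduction you have $\delta(G)\ge \frac{v(T)-1}{2}$, but you then write that since at most $v(T)-2$ neighbors can be occupied the greedy step succeeds when the parent has $T$-degree at most $v(T)/2$. That arithmetic does not close: $\frac{v(T)-1}{2}$ free neighbors against $v(T)-2$ occupied ones leaves nothing in general, regardless of the parent's degree in $T$. The greedy embedding works cleanly only under the stronger hypothesis $\delta(G)\ge v(T)-1$; with $\delta(G)\ge \frac{v(T)-1}{2}$ one needs additional structural arguments (this is exactly why the conjecture is hard). Your description of the obstruction is morally right, but the specific inequality you wrote does not capture it.
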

In 2005, McLennan~\cite{mclennan2005erdHos} proved that the Erd\H{o}s--S\'os Conjecture holds for trees of diameter at most four.
\begin{theorem}[McLennan \cite{mclennan2005erdHos}]\label{dim 4}
 Let T  be a tree of diameter at most four, then $\ex(n,T)\le \frac{v(T)-2}{2}n$.
\end{theorem}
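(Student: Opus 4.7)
The plan is to dispose of the small-diameter cases quickly and then focus on trees of diameter exactly $4$. When $\mathrm{diam}(T)\le 2$, $T$ is a star $K_{1,k-1}$ and the bound is trivial: $e(G)>\frac{k-2}{2}n$ forces average degree above $k-2$, so some vertex has $d_G(v)\ge k-1$ and spans the required star. When $\mathrm{diam}(T)=3$, $T$ is a double star, and the claim is classical: iteratively delete vertices of degree at most $\lfloor\tfrac{k-2}{2}\rfloor$ until the minimum degree exceeds $\tfrac{k-2}{2}$, then pick any edge $uv$ of the resulting graph and embed the two groups of leaves among the private neighbors of $u$ and $v$.

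For $\mathrm{diam}(T)=4$, I would let $c$ be the unique center of $T$, let $u_1,\dots,u_d$ be the neighbors of $c$, and let $a_i\ge 0$ be the number of leaves attached to $u_i$, ordered so that $a_1\ge a_2\ge\dots\ge a_d$; then $k=1+d+\sum_{i=1}^{d} a_i$. Take a minimum counterexample $G$ on $n$ vertices with $e(G)>\frac{k-2}{2}n$. A standard minimality argument forces $\delta(G)\ge \lceil\frac{k-1}{2}\rceil$: if a vertex $w$ satisfied $d_G(w)\le \lfloor\frac{k-2}{2}\rfloor$, then $G-w$ would still satisfy $e(G-w)>\frac{k-2}{2}(n-1)$ while being $T$-free, contradicting minimality.

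To embed $T$ in $G$, my plan is a greedy/exchange strategy. Pick a vertex $v\in V(G)$ of maximum degree to serve as the image of $c$; since the average degree exceeds $k-2$, we may assume $d_G(v)\ge k-1\ge d$. Then choose $d$ neighbors $v_1,\dots,v_d$ of $v$ as images of $u_1,\dots,u_d$, ordered so that $d_G(v_i)$ is largest when $a_i$ is largest, and finally embed, for each $i$, the $a_i$ leaves of $u_i$ into $N_G(v_i)$ while avoiding the already-used vertices. For this to succeed, each $v_i$ must retain at least $a_i$ neighbors outside $\{v,v_1,\dots,v_d\}$ and outside the leaves placed so far.

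The main obstacle is the possible failure of this pass when many $v_i$'s share neighbors inside $N_G(v)$ or compete for the same outside leaves. The delicate case arises when $d$ is close to $k-1$: then the $v_i$'s are crowded together inside the moderate-sized set $N_G(v)$ while most $a_i$'s are small but positive. Here I would invoke an exchange argument: if some $v_i$ has too many neighbors in the currently used set, swap $v_i$ with an unused neighbor of $v$ of higher degree, or reassign a leaf slot between two $v_j$'s, using $\delta(G)\ge \lceil\frac{k-1}{2}\rceil$ to guarantee free room. A short case split on whether $d\le \lceil\frac{k-1}{2}\rceil$ or $d>\lceil\frac{k-1}{2}\rceil$, combined with the flexibility to permute interchangeable leaves, should close the argument and contradict $T$-freeness of $G$.
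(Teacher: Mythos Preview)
The paper does not prove this theorem; it is quoted from McLennan's 2005 paper and used as a black box in Section~\ref{sec3}. So there is no ``paper's own proof'' to compare against, and your attempt must be judged on its own.

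Your treatment of diameters at most $3$ is essentially fine, with one correction: for the double star $S_{a,b}$ (say $a\le b$, $k=a+b+2$) you cannot take an \emph{arbitrary} edge $uv$ after passing to minimum degree $\ge\lceil(k-1)/2\rceil$, because the side hosting $b$ leaves needs degree at least $b+1$, which can be much larger than $\lceil(k-1)/2\rceil$. You must first pick $v$ of maximum degree (so $d(v)\ge k-1$), then any neighbor $u$; now $|N(u)\setminus\{v\}|\ge a$ and $|N(v)\setminus(\{u\}\cup L_u)|\ge b$, and the embedding goes through.

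The diameter-$4$ part, however, has a genuine gap. After reducing to $\delta(G)\ge\lceil(k-1)/2\rceil$ and choosing a max-degree vertex $v$ for the centre, the only control you have over the chosen neighbours $v_1,\dots,v_d$ is this minimum-degree bound. But if some $a_i$ is large (or even in the balanced case $d=2$, $a_1=a_2=m$, $k=2m+3$), you need $v_i$ to retain $a_i$ neighbours outside the already-used vertices, and $\lceil(k-1)/2\rceil$ is simply not enough: with $d(v_1)=m+1$ one has $|N(v_1)\setminus\{v,v_2\}|\ge m-1$, one short of the required $m$, before even accounting for collisions with the leaves of $v_2$. Your proposed remedy---swap $v_i$ with an unused neighbour of higher degree, or reassign leaves between branches---does not obviously terminate or succeed, and ``a short case split \dots\ should close the argument'' is not a proof. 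McLennan's actual argument is considerably more delicate: it does not rely solely on the minimum-degree reduction but exploits a carefully chosen maximal partial embedding and a counting argument tailored to the profile $(a_1,\dots,a_d)$, with a nontrivial case analysis. Your outline captures the right \emph{shape} of the proof but omits exactly the part that costs McLennan the bulk of his paper.
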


A tree is called balanced if it has the same number of vertices in each color class when the tree is viewed as a bipartite graph. 
A forest is called balanced if  each of its components is a balanced tree. 
Bushaw and Kettle~\cite{Kettle} proved the following theorem.
\begin{theorem}[Bushaw and Kettle~\cite{Kettle}]\label{Kettle}
Let $H$ be a balanced forest on $2a$ vertices which comprises at least two trees. If the Erd\H{o}s--S\'{o}s Conjecture holds for each component tree in $H$, then for any $n\ge 3a^2+32a^2\binom{2a}{a}$, we have
\[
\ex(n,H)=\begin{cases}
\binom{a-1}{2}+(a-1)(n-a+1) & \text{if $H$ admits a perfect matching,}\\
(a-1)(n-a+1) & \text{otherwise.}
\end{cases}
\]
\end{theorem}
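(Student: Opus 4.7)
I plan to prove the two bounds separately. The lower bound comes from an explicit $H$-free construction, while the upper bound is proved by an iterative ``hub extraction'' followed by a component-by-component embedding that leverages the Erd\H{o}s--S\'os hypothesis on each tree of $H$.

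For the lower bound in the perfect-matching case, take $G^{*} := K_{a-1} + \overline{K_{n-a+1}}$. It has exactly $\binom{a-1}{2} + (a-1)(n-a+1)$ edges, and since every edge of $G^{*}$ meets the central $(a-1)$-clique, its matching number is $a-1 < a$; hence $H$ cannot embed, as $H$ has a perfect matching of size $a$. In the non-perfect-matching case, use a suitable variant of $G^{*}$ (essentially dropping the clique edges so that only the $(a-1)(n-a+1)$ edges of the join remain, possibly after a small local modification), and verify $H$-freeness by a bipartition-count argument: since $H$ has no perfect matching, every attempted embedding into a bipartition of sizes $(a-1, n-a+1)$ would still need strictly more than $a-1$ vertices on the smaller side when the component bipartitions are summed up.

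For the upper bound, let $G$ be an $n$-vertex graph with strictly more edges than the claimed bound. Set $G_{0} := G$ and, for $i = 1, \dots, a-1$, let $v_{i}$ be a maximum-degree vertex in $G_{i-1}$ and $G_{i} := G_{i-1} - v_{i}$. Combining the edge lower bound with the Erd\H{o}s--S\'os hypothesis applied to each tree in $H$, one shows inductively that $\deg_{G_{i-1}}(v_{i}) \ge n - O(a)$; otherwise the leftover $G_{i}$ still has too many edges, allowing one to recurse on a target forest with smaller $a$ or to directly find $H$ by Erd\H{o}s--S\'os in $G_{i}$. Put $S := \{v_{1}, \dots, v_{a-1}\}$. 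Enumerate the tree components of $H$ as $T_{1}, \dots, T_{r}$, pick a distinguished vertex $u_{j} \in T_{j}$ (for instance a centroid), assign each $T_{j}$ to some hub $v_{i_{j}} \in S$, and embed $T_{j}$ with $u_{j} \mapsto v_{i_{j}}$ into the large set $N_{G}(v_{i_{j}})$ via Erd\H{o}s--S\'os, avoiding vertices already in use. In the perfect-matching case one matched pair of $H$ is embedded outside $S$, which is possible exactly because of the extra $\binom{a-1}{2}$ slack in the hypothesis. The threshold $n \ge 3a^{2} + 32a^{2}\binom{2a}{a}$ is calibrated so that at every stage enough fresh vertices remain, the binomial factor accounting for the branching over component-to-hub assignments.

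The hardest part is the hub-extraction phase: one must show that each extracted $v_{i}$ truly has degree close to $n$, rather than merely above the average degree of $G_{i-1}$, since only near-full degree leaves room in $N_{G}(v_{i})$ for the subsequent tree embedding. This requires a carefully tuned monotone invariant on the residual edge count after each deletion, together with a direct Erd\H{o}s--S\'os fallback in the boundary case where the invariant just fails. A secondary subtlety arises when $r > a-1$: several components must share a single hub, and one has to verify that the hub's neighborhood is still large enough for all of them after previous embeddings. Here the balanced-forest hypothesis on $H$ is essential, since it equates the total vertex demand of $H$ to $2a$ and balances the demand on the two bipartition sides, matching the available supply from $S$ and from the large part.
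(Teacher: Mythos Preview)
This theorem is not proved in the paper; it is quoted from Bushaw and Kettle and used only as a black box in the construction at the start of Section~\ref{sec3}. There is therefore no paper proof to compare your proposal against.

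A few remarks on the proposal itself. Your lower bound in the perfect-matching case is correct. In the non-perfect-matching case your justification is muddled: the graph $K_{a-1,n-a+1}$ is $H$-free simply because every component of $H$ is a balanced tree, so any embedding into a bipartite host places exactly $a$ vertices on each side, while the small side has only $a-1$; the hypothesis ``$H$ has no perfect matching'' is irrelevant here. Where that hypothesis actually matters is in the \emph{upper} bound for the non-perfect-matching case, since you must now show that already $(a-1)(n-a+1)+1$ edges force a copy of $H$, a strictly sharper statement than in the perfect-matching case. Your hub-extraction scheme, as written, extracts $a-1$ high-degree vertices and embeds the $r\ge 2$ components rooted at them; with $e(G)>\binom{a-1}{2}+(a-1)(n-a+1)$ this is plausible, but with only $e(G)>(a-1)(n-a+1)$ the invariant you allude to fails at the very first step (there need not be any vertex of degree close to $n$), and nothing in your outline explains how the absence of a perfect matching in $H$ compensates. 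That is the genuine gap: the non-perfect-matching case requires an additional structural input about balanced trees without perfect matchings, and your proposal does not supply it. The explanation of the threshold $3a^{2}+32a^{2}\binom{2a}{a}$ as ``branching over component-to-hub assignments'' is also not convincing; in the original argument the binomial factor arises for a different reason.
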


\vskip 3mm
Now, making use of Theorems~\ref{dim 4} and~\ref{Kettle}, we construct some bipartite graphs $H$ which are negative examples for Question~\ref{problem1}. 
Let $\mathcal{H}_1$ be the family  of all balanced trees on $2a$ vertices which admit no perfect matching and for which the Erd\H{o}s--S\'{o}s Conjecture holds.
One can see that $\mathcal{H}_1$ is not empty since a double star $S_{a-1,a-1}$ is a balanced tree on $2a$ vertices and the Erd\H{o}s--S\'{o}s Conjecture holds for it by Theorem~\ref{dim 4}. 
Let $\mathcal{H}_2$ be the family of balanced trees on $2a$ vertices for which the Erd\H{o}s--S\'{o}s Conjecture holds for sufficiently large $n$.
Note that~$\mathcal{H}_2$ is also nonempty, for example a path on $2a$ vertices belongs to $\mathcal{H}_2$.

Let $H_1\in \mathcal{H}_1$, $H_2\in \mathcal{H}_2$ and set $H=H_1\cup H_2$. 
We know that $H$ is a balanced forest on $4a$ vertices. Since $H_1$ admits no perfect matching, $H$ admits no perfect matching either. The Erd\H{o}s--S\'{o}s Conjecture holds for each component of $H$, hence by Theorem \ref{Kettle}, when $n$ is sufficiently large, we have
\[\ex(n,H)=(2a-1)(n-2a+1).\]

On the other hand, consider a partition of the vertices of the complete graph $K_n$ into parts $X$ and $Y$ with $|X|=2a-1$ and $|Y|=n-2a+1$. 
We color all edges between $X$ and $Y$ red and the remaining edges blue. 
One can see that the red edges induce a complete bipartite graph $K_{2a-1,n-2a+1}$ which contains no red copy of $H$.
The blue edges induce a blue $(2a-1)$-clique and a blue $(n-2a+1)$-clique which are disjoint with each other.
Since each component of $H$ contains $2a$ vertices, all blue copies of $H$ are contained in the $(n-2a+1)$-clique.
Therefore, all red edges and all the edges in the blue $(2a-1)$-clique are NIM-$H$, that is,
\[f(n,H)\ge \binom{2a-1}{2}+(2a-1)(n-2a+1)=\binom{2a-1}{2}+\ex(n,H).\]
Therefore, such a bipartite graph $H$ attains the upper bound of the inequality~\eqref{eq1}.

\vskip 3mm
Next we prove that if the bipartite graph $H$ contains a tail $v_0v_1v_2$, then $f(n,H)\le \ex(n,H)+\binom{|A|-1}{2}$. 
Note that it is possible that $H$ is disconnected, hence let $H=H_1\cup \dots \cup H_q$, where $H_i$ are its components (if $H$ is connected, then $H=H_1$) and we say the tail $v_0v_1v_2$ is contained in $H_1$.
Let $A_i,B_i$ be the two color classes of $H_i$ with $|A_i|\le |B_i|$ for any $1 \le i \le q$, and let $A=\bigcup _{i=1}^qA_i$, $B=\bigcup^q _{i=1}B_i$. Set $a=|A|$.

Since we take $n$ to be sufficiently large, we may assume $n\ge R \big( K_{v(H)}, K_{v(H)} \big)$. Let $c$ be a red-blue edge-coloring of $K_n$. Without loss of generality, there is a blue clique on at least $v(H)$ vertices in $K_n$.
Let $K_t$ be a blue clique in $K_n$ such that $t$ is as large as possible.
We have $t\ge v(H)$ and every other vertex has a red neighbor in $V(K_t)$.
We partition  $V(K_n)\setminus V(K_t)$ into two subsets $X,Y$ such that $Y$ consists of the vertices which have blue neighbors in $V(K_t)$ and~$X$ consists of the remaining vertices. Hence all edges between $V(K_t)$ and~$X$ are red.

The following claims will be used several times.
\begin{claim}\label{claim1}
All  blue NIM-$H$ edges  are contained in $X$.
\end{claim}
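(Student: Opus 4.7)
The plan is to show that any blue edge $uv$ with at least one endpoint in $V(K_t)\cup Y$ extends to a blue copy of $H$; this will confine all blue NIM-$H$ edges to $X$ as claimed. The key device is the tail $v_0v_1v_2$ inside the component $H_1$: since $v_2$ is a leaf and $v_1$ has degree $2$ in $H$, one can place the edge $v_1v_2$ at an essentially arbitrary blue edge, anchor $v_1$ to a convenient blue neighbor $v_0$, and then use the large blue clique $K_t$ as a reservoir to embed the remainder of $H$.

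I would split into two cases depending on where $u$ lies. First, if $u\in V(K_t)$, then $v\in V(K_t)\cup Y$, because every edge between $V(K_t)$ and $X$ is red. Set $v_1:=u$, $v_2:=v$, then pick $v_0$ and every remaining vertex of $H$ greedily from $V(K_t)\setminus\{u,v\}$; this works because $K_t$ is a monochromatic blue clique and $t\ge v(H)$, so there is room for the $v(H)-2$ remaining vertices. Second, if $u\in Y$, then by swapping $u$ and $v$ if necessary we may assume $v\notin V(K_t)$. By definition of $Y$ there is a vertex $w\in V(K_t)$ with $uw$ blue. Set $v_0:=w$, $v_1:=u$, $v_2:=v$, and embed $V(H_1)\setminus\{v_0,v_1,v_2\}$ together with $V(H_2),\dots,V(H_q)$ greedily inside $V(K_t)\setminus\{w\}$. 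All required edges not involving $u$ or $v$ lie inside $K_t$ and are therefore blue, while $v_0v_1=wu$ and $v_1v_2=uv$ are blue by construction, so the resulting copy of $H$ is blue and contains $uv$.

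The only issue is routine bookkeeping: $v_0, v_1, v_2$ must be distinct, which is automatic since at most one of $u,v$ lies in $V(K_t)$ in the first case and $w\in V(K_t)$ while $u,v\notin V(K_t)$ in the second; and $V(K_t)$ must have room for the remaining vertices of $H$, which follows immediately from $t\ge v(H)$. I do not foresee a genuine obstacle here; the tail assumption makes the embedding entirely flexible, and the claim follows once both cases are verified.
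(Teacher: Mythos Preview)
Your proof is correct and follows essentially the same route as the paper's: embed the tail $v_0v_1v_2$ at the given blue edge (anchoring $v_0$ at a blue neighbour in $V(K_t)$) and then absorb the rest of $H$ into the large blue clique $K_t$. The paper dispatches your Case~1 with the single word ``obviously'' and then gives exactly your Case~2 argument, so the two proofs are the same in substance.

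One small slip in your bookkeeping paragraph: in Case~1 it is \emph{not} true that at most one of $u,v$ lies in $V(K_t)$, since both could. Distinctness of $v_0,v_1,v_2$ nonetheless holds for the trivial reason that you chose $v_0\in V(K_t)\setminus\{u,v\}$, so the conclusion is unaffected.
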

\begin{proof}

 Obviously, the blue edges in $K_t$ and $K_n \big[ V(K_t),Y \big]$ are not NIM-$H$. 
 Let $xy$ be a blue edge with $y\in Y$ and $x\in X\cup Y$. 
 By the definition of $Y$, the vertex $y$ has a blue neighbor, say~$v$, in $V(K_t)$. If we embed $V(H)\setminus \{v_1,v_2\}$ into $V(K_t)$ and view $vyx$ as the tail of $H$, then we find a blue copy of $H$ containing $xy$. Thus $xy$ is not NIM-$H$.
Therefore, all  blue NIM-$H$ edges  are contained in~$X$.
\end{proof}

\begin{claim}\label{claim2}
 If $|X|\ge a$, then the red edges between $X$ and $V(K_t)\cup Y$ are not NIM-$H$.
\end{claim}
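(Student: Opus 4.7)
The plan is to show that any red edge $uv$ with $u\in X$ and $v\in V(K_t)\cup Y$ lies in a red copy of $H$, constructed by embedding $H$ into the complete red bipartite graph $K_n[X,V(K_t)]$ with small local adjustments to accommodate $v$. If $v\in V(K_t)$, I pick any edge $a_0b_0$ of $H$ with $a_0\in A$, $b_0\in B$, set $a_0\mapsto u$ and $b_0\mapsto v$, and inject the remaining vertices of $A$ into $X\setminus\{u\}$ and the remaining vertices of $B$ into $V(K_t)\setminus\{v\}$, also placing the other components $H_2,\dots,H_q$ in the leftover capacity. Every edge of $H$ then runs between $X$ and $V(K_t)$ and is red; the hypotheses $|X|\ge a$ and $t\ge v(H)$ guarantee enough room.

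For $v\in Y$, I need the pre-image of $v$ in the embedding to touch only edges that I can verify are red, since $v$ may have many blue neighbours. The tail $v_0v_1v_2$ of $H_1$ supplies the flexibility. If $v_2\in B$ (equivalently $v_1\in A$), I set $v_1\mapsto u$ and $v_2\mapsto v$, then inject $A\setminus\{v_1\}$ into $X\setminus\{u\}$ and $B\setminus\{v_2\}$ into $V(K_t)$, extending to $H_2,\dots,H_q$ as before. By the tail property, $v_2$ has no $H$-neighbour other than $v_1$ and $v_1$ has no $H$-neighbour other than $v_0,v_2$, so every $H$-edge besides $v_1v_2\mapsto uv$ crosses $X$ and $V(K_t)$ and is red.

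The delicate subcase is $v_2\in A$ (so $v_1\in B$ and $v_0\in A$): assigning $v_2\mapsto v$ naively would put the high-degree vertex $v_0$ into $X$ and strand its $B$-neighbours. The fix is to rotate the entire tail onto $v$, exploiting the maximality of $K_t$: every vertex outside $K_t$ has a red neighbour in $V(K_t)$, so I pick $w\in V(K_t)$ with $vw$ red. Setting $v_0\mapsto u$, $v_1\mapsto v$ and $v_2\mapsto w$ makes both tail edges $v_0v_1\mapsto uv$ and $v_1v_2\mapsto vw$ red. I then inject $A\setminus\{v_0,v_2\}$ into $X\setminus\{u\}$ and $B\setminus\{v_1\}$ into $V(K_t)\setminus\{w\}$, extending to $H_2,\dots,H_q$. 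The tail property combined with $v_2$ being a leaf prevents any other $H$-edge from touching $v$ or $w$, so the remaining edges run between $X$ and $V(K_t)$ and are red; feasibility is immediate from $|X|\ge a$ and $t\ge v(H)$. The main obstacle I foresee is spotting this rotation: once one realises that the degree-$2$ middle vertex $v_1$ (not the leaf $v_2$) should be the one that lands on the uncontrollable vertex $v$, so that the two edges at $v$ become exactly the two tail edges—both of which can be made red using $u$ and the maximality neighbour $w$—the remaining verification is mechanical.
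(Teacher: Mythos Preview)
Your proof is correct and follows essentially the same strategy as the paper's: embed $A$ into $X$ and $B$ into $V(K_t)$, and in the case $v\in Y$ route the tail through $v$, invoking the maximality of $K_t$ to obtain a red neighbour $w\in V(K_t)$ of $v$ when $v_1\in B$. The only cosmetic difference is that in the subcase $v_2\in B$ the paper still passes through the red triangle $\{u,v,w\}$ while you (correctly) observe that the extra red neighbour is unnecessary there; otherwise the two arguments coincide.
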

\begin{proof}
Since the red edges between $X$ and $V(K_t)$ induce a red complete bipartite graph and $|X|\ge a$ and $t\ge v(H)$, each such edge is contained in a red copy of $H$, thus these edges are not NIM-$H$.
Let $xy$ be a red edge with $x\in X$, $y\in Y$.
By the maximality of $K_t$, the vertex $y$ has a red neighbor, say $v$, in $V(K_t)$. Actually,~$\{x,y,v\}$ induces a red triangle. 
If the tail $v_0v_1v_2$ of~$H$ satisfies $\{v_0,v_2\}\subset B$ and $v_1\in A$, then embed $B\setminus\{v_2\}$ into $V(K_t)$ so that $v_0$ is identified with $v$, embed $A\setminus \{v_1\}$ into $X\setminus\{x\}$ and view $vxy$ as the tail of $H$, thus we find a red copy of $H$ containing $xy$. So in this case, $xy$ is not NIM-$H$. 
If the tail $v_0v_1v_2$ of $H$ satisfies $\{v_0,v_2\}\subset A$ and $v_1\in B$, then embed $B\setminus\{v_1\}$ into $V(K_t)\setminus \{v\}$, embed $A\setminus \{v_2\}$ into $X$ so that $v_0$ is identified with $x$. View $xyv$ as the tail,  we  find a red copy of $H$ containing $xy$. So in this case, $xy$ is not NIM-$H$ either.
\end{proof} 

We distinguish three cases based on the size of $X$.

{\bf Case 1:} $|X|\ge a+1$. In this case, we first claim that the red edges in $X$ are also not  NIM-$H$.  Let $xx'$ be a red edge contained in $X$ and $v$ be a vertex in $K_t$. If the tail $v_0v_1v_2$ in $H$ satisfies $\{v_0,v_2\}\subset B$ and $v_1\in A$, then since $\big| X \setminus \{x,x'\} \big| \ge a-1= \big| A \setminus \{v_1\} \big|$,
we can embed $A\setminus \{v_1\}$ into $X\setminus\{x,x'\}$, embed $B\setminus \{v_2\}$ into $V(K_t)$ so that $v_0$ is identified with $v$ and view $vxx'$ as the tail $v_0v_1v_2$, thereby finding a red copy of $H$ containing $xx'$. So in this case, $xx'$ is not NIM-$H$.
If the tail $v_0v_1v_2$ in $H$ satisfies $\{v_0,v_2\}\subset A$ and $v_1\in B$, then we embed $A\setminus\{v_2\}$ into $X\setminus\{x'\}$ so that $v_0$ is identified with $x$, embed $B\setminus\{v_1\}$ into $V(K_t)\setminus\{v\}$ and view $xx'v$ as the tail, and again we can find a red copy of $H$ containing $xx'$. Therefore, $xx'$ is not NIM-$H$.

By Claim~\ref{claim2} and the above result, all red NIM-$H$ edges are contained in $V(K_t)\cup Y$. 
Note that the red NIM-$H$ edges contained in $V(K_t)\cup Y$ induce an $H_1$-free graph. Otherwise, such a red copy of $H_1$ together with a red copy of $H_2\cup\dots \cup H_q$ (if $H$ is disconnected) contained in the complete bipartite graph $K_n \big[ X,V(K_t) \big]$ yields a red copy of $H$ containing an NIM-$H$ edge, a contradiction.
Analogously, the blue NIM-$H$ edges contained in $X$ induce a graph which is $H_1$-free.
Hence,
\[\begin{split}
\big| E(c,H) \big|\le &\ex \big( |X|,H_1 \big) +\ex \big( n-|X|,H_1 \big)\\
\le &\ex(n,H_1)\le \ex(n,H),
\end{split}\]
where the second inequality holds since $H_1$ is connected. The proof is complete in this case.

{\bf Case 2:} $|X|=a$.  By Claim \ref{claim2},  the set of red NIM-$H$ edges can be partitioned into two parts: the ones contained in $V(K_t)\cup Y$ and the remaining ones which are contained in $X$. 
Since all blue NIM-$H$ edges are contained in~$X$ by Claim~\ref{claim1}, the sum of the total number of blue NIM-$H$ edges and the number of red NIM-$H$ edges contained in $X$ is at most $\binom{a}{2}$. 
The set of red NIM-$H$ edges contained in $V(K_t)\cup Y$ yields an $H_1$-free graph. Indeed, otherwise together with a red copy of $H_2\cup\dots \cup H_q$ (if $H$ is disconnected) in  $K_n \big[ X,V(K_t) \big]$, we could find a red copy of $H$ containing a red NIM-$H$ edge, a contradiction. 
Thus the number of red NIM-$H$ edges contained in $V(K_t)\cup Y$ is at most $\ex(n-a,H_1)$.

Therefore, the total number of NIM-$H$ edges is at most $\ex(n-a,H_1)+\binom{a}{2}$. 
Since $H_1$ is connected and contains a tail, it follows that the union of a star $S_{a-1}$ on $a$ vertices    and an extremal graph for $\ex(n-a,H_1)$ is still $H_1$-free. Hence,
\[\ex(n-a,H_1)+(a-1)\le \ex(n,H_1).\]
Thus, we have
\[\begin{split}
\big| E(c,H) \big| \le &\ex(n-a,H_1)+\binom{a}{2}\le \ex(n,H_1)+\binom{a-1}{2}\\
\le &\ex(n,H)+\binom{a-1}{2},
\end{split}\]
and the proof of this case is complete.

{\bf Case 3:} $|X|\le a-1$. By Claim \ref{claim1}, the number of blue NIM-$H$ edges is at most $\binom{a-1}{2}$, and the red NIM-$H$ edges yield an $H$-free graph. Hence
\[ \big| E(c,H) \big| \le \ex(n,H)+\binom{a-1}{2},\]
and the proof is complete. $\hfill\blacksquare$

\begin{remark}
In \cite{zhu2022tur}, the first author and Chen also give a family of examples such that $\chi(H)=3$ and $f(n,H)>\ex(n,H)$.  
\end{remark}

\section{Proof of Theorem \ref{Thm3}} \label{sec4}
We first give a $2k$-edge-coloring of $K_n$ with $(2k-1)\ex(n, P_{2k})+(k-1)\binom{2k-1}{2}$ NIM-$P_{2k}$ edges when $2k-1$ is a prime and $n\in \big\{ a(2k-1)+(k-1), \, a(2k-1)+k \big\}$. Before showing our construction, we need to recall the exact value of $\ex(n,P_\ell)$. 
\begin{theorem}[Faudree and Schelp~\cite{Faudree}]\label{Faudree}
Let $n=a(\ell-1)+b$ with $0\le b\le \ell-2$. Then we have
$$\ex(n,P_{\ell})=a\binom{\ell-1}{2}+\binom{b}{2}.$$
If $\ell$ is even and $b\in \{\ell/2,\ell/2-1\}$, then the extremal graphs are $tK_{\ell-1}\cup \big( K_{\ell/2-1}+\overline{K}_{n-t(\ell-1)-\ell/2+1} \big)$ for any $0 \le t\le a$. Otherwise $aK_{\ell-1}\cup K_b$ is the unique extremal graph.
\end{theorem}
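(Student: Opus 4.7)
The plan is to prove the formula for $\ex(n,P_\ell)$ in two stages---constructions achieving the lower bound and a matching upper bound by induction on $n$---and then extract the extremal characterization by tracing equality cases. For the lower bound I would first check that $aK_{\ell-1}\cup K_b$ is $P_\ell$-free since every component has at most $\ell-1$ vertices, and its edge count is directly $a\binom{\ell-1}{2}+\binom{b}{2}$. When $\ell$ is even and $b\in\{\ell/2,\ell/2-1\}$, I would verify separately that the book graph $K_{\ell/2-1}+\overline{K}_m$ contains no $P_\ell$: between two independent-set vertices a path must pass through an apex, so any path uses each of the $\ell/2-1$ apex vertices at most once and has at most $(\ell/2-1)+\ell/2=\ell-1$ vertices. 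A short algebraic calculation then shows that $t\binom{\ell-1}{2}+\binom{\ell/2-1}{2}+(\ell/2-1)(n-t(\ell-1)-\ell/2+1)$ collapses to $a\binom{\ell-1}{2}+\binom{b}{2}$ independently of the parameter $t$, so every member of the stated family attains the target edge count.

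For the upper bound I would induct on $n$. The base case $n\le\ell-1$ is trivial since $K_n$ itself is $P_\ell$-free and $\binom{n}{2}$ matches the formula with $a=0$, $b=n$. In the inductive step with $n\ge\ell$, let $G$ be a $P_\ell$-free graph on $n$ vertices. If every connected component of $G$ has at most $\ell-1$ vertices, then $e(G)\le\sum_i\binom{|C_i|}{2}$, and the convexity of $x\mapsto\binom{x}{2}$ together with the constraint $|C_i|\le\ell-1$ shows that this sum is maximized when as many components as possible have size exactly $\ell-1$, giving precisely $a\binom{\ell-1}{2}+\binom{b}{2}$.

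The substantive case is when some component $C$ has at least $\ell$ vertices. To handle it, the plan is to invoke a Kopylov-type bound: a $2$-connected $P_\ell$-free graph on $m\ge\ell$ vertices has at most $\max\bigl\{\binom{\ell-1}{2},\binom{\ell/2-1}{2}+(\ell/2-1)(m-\ell/2+1)\bigr\}$ edges, with equality in the second term only through the book graph $K_{\ell/2-1}+\overline{K}_{m-\ell/2+1}$. Using the block-cut decomposition of $C$ together with a longest-path rotation-extension argument, I would peel off a block of size at most $\ell-1$ so that the remainder is still $P_\ell$-free on fewer vertices, apply the Kopylov bound to the block and the inductive hypothesis to the remainder, and add. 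Uniqueness of the extremal graphs is then obtained by tracking equality throughout: in the small-components case convexity forces all component sizes to be $\ell-1$ or $b$, pinning down $aK_{\ell-1}\cup K_b$; in the large-component case equality in the Kopylov bound through the book graph can occur precisely when $\ell$ is even and $b\in\{\ell/2,\ell/2-1\}$, producing the one-parameter family $tK_{\ell-1}\cup(K_{\ell/2-1}+\overline{K}_{n-t(\ell-1)-\ell/2+1})$. I expect the main obstacle to be establishing the Kopylov edge bound for $2$-connected $P_\ell$-free graphs and the bookkeeping needed to ensure that peeling off a dense block leaves a remainder that fits the inductive hypothesis with no slack; this is where the classical argument concentrates most of its technical effort.
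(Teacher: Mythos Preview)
The paper does not contain a proof of this theorem: it is quoted as a known result of Faudree and Schelp and is used only as a tool in Section~\ref{sec4}. Consequently there is no ``paper's own proof'' to compare against.

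Your outline follows the standard route to this classical result and is broadly sound. The lower-bound part is complete: the edge count for $aK_{\ell-1}\cup K_b$ is immediate, and for even $\ell$ the identity $\binom{\ell-1}{2}=(\ell-1)(\ell/2-1)$ makes your claimed collapse of the book-graph count to $a\binom{\ell-1}{2}+\binom{b}{2}$ go through independently of $t$. Your argument that $K_{\ell/2-1}+\overline{K}_m$ is $P_\ell$-free is also correct.

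For the upper bound, your plan is the right one but the sketch glosses over the genuinely nontrivial step. The ``peel off a block of size at most $\ell-1$'' idea needs care: an end-block of a $P_\ell$-free connected graph on $\ge\ell$ vertices need not have at most $\ell-1$ vertices (the book graph itself is $2$-connected on arbitrarily many vertices), so you cannot reduce by stripping small blocks alone. What actually drives the induction in the Faudree--Schelp argument is a longest-path analysis inside a connected component $C$ with $|C|\ge\ell$: if $P=v_1\cdots v_p$ is a longest path in $C$ (so $p\le\ell-1$), then its endpoints have all their neighbours on $P$, and a rotation argument bounds the degrees along $P$ sharply enough to delete a well-chosen set of $\ell-1$ vertices while losing at most $\binom{\ell-1}{2}$ edges. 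The Kopylov bound you cite handles the $2$-connected case, but you still need to stitch blocks together, and equality tracking through the block tree is where the case split between $aK_{\ell-1}\cup K_b$ and the book family really emerges. Your proposal correctly flags this as the main obstacle; filling it in is essentially reproducing the original proof.
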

Therefore, by Theorem~\ref{Faudree}, when $n\in \big\{ a(2k-1)+(k-1), \, a(2k-1)+k \big\}$, the extremal graphs for $\ex(n,P_{2k})$ are $tK_{2k-1}\cup \big( K_{k-1}+\overline{K}_{n-t(2k-1)-(k-1)} \big)$ for any $0\le t\le a$.

\vskip 3mm
 Let $U$ be a subset of size $(2k-1)^2$ of $V(K_n)$ and label the vertices  of $U$ by $[i,j]$ where $1\le i,j\le 2k-1$. 
 We divide $U$ into $2k-1$ subsets by setting $$U_i= \big\{ [i,1],~[i,2],\ldots,~[i,2k-1] \big\},~~ 1\le i\le 2k-1.$$
When it is not confusing, we also let $U$ and $U_i$ denote the cliques induced by the vertices in them. 

For any $1 \le i,j \le 2k-1$, let $\sigma_{ji}$ denote the clique induced by the vertices $[1,i], [2,{i+j}], \ldots, \allowbreak [2k-1, i+(2k-2)j]$, where the indices are taken modulo $2k-1$. For any $1 \le j \le 2k-1$, let
\[ \mathcal{C}_j = \{ \sigma_{ji} : 1 \le i \le 2k-1 \}. \]
Then $\mathcal{C}_j$ is a set consisting of $2k-1$ disjoint $(2k-1)$-cliques.


Let $c: E(K_n)\rightarrow \{c_1,\ldots,c_{2k}\}$ be a $2k$-edge-coloring  defined as follows. Let $W=V(K_n)\setminus U$.  For any $j\in [2k-1]$, we assign the color $c_j$ to the edges of each clique $\sigma_{ji}$ in $\mathcal{C}_j$. Let $\sigma_{j1}^\diamond$ denote the clique induced by the vertices $[k+1,1+kj], \ldots, [2k-1,1+(2k-2)j]$. Clearly, we have $\sigma_{j1}^\diamond \subset \sigma_{j1}$. Now consider the sub-clique $\sigma_{j1}-\sigma_{j1}^\diamond$ and replace the color $c_j$ by $c_{2k}$ inside it. With this, $\sigma_{j1}$ decomposes into a copy of $K_{k-1}+\overline{K}_{k}$ colored by $c_j$ and a copy of $K_{k}$ colored by $c_{2k}$. After this, we assign the color $c_j$ to all the edges between $\sigma_{j1}^\diamond$ and $V$. Figure~\ref{fig:coloring} shows the subgraph induced by the edges colored by~$c_{2k-1}$. Finally, we assign the color $c_{2k}$ to the edges which have not been colored yet.


\begin{figure}[h]
 \begin{center}
 \begin{tikzpicture}[scale=1.5]
  \tikzstyle{vertex}=[draw,circle,fill=black,minimum size=4,inner sep=0]
  
  \draw[very thick, fill=black!20] (-0.5,0.9) ellipse (0.15 and 0.5);
  
  \node[vertex] (v11) at (-0.5,2.25) {};
  \draw[fill] (-0.5,2) circle (0.3pt);
  \draw[fill] (-0.5,1.9) circle (0.3pt);
  \draw[fill] (-0.5,1.8) circle (0.3pt);
  \node[vertex] (v12) at (-0.5,1.55) {};
  \node[vertex] (v13) at (-0.5,1.25) {};
  \draw[fill] (-0.5,1) circle (0.3pt);
  \draw[fill] (-0.5,0.9) circle (0.3pt);
  \draw[fill] (-0.5,0.8) circle (0.3pt);
  \node[vertex] (v14) at (-0.5,0.55) {};
  
  \draw[very thick] (v11) to [bend left=70] (v13);
  \draw[very thick] (v11) to [bend left=80] (v14);
  \draw[very thick] (v12) to [bend left=50] (v13);
  \draw[very thick] (v12) to [bend left=70] (v14);
  
  \node at (-0.5,0.2) {$\sigma_{2k-1,1}^{\diamond}$};
  
  \draw[very thick, fill=black!20] (0.5,1.4) ellipse (0.17 and 1);
  \node at (0.5,2.8) {$\sigma_{2k-1,2}$};
  
  \node[vertex] (v21) at (0.5,2.25) {};
  \draw[fill] (0.5,2) circle (0.3pt);
  \draw[fill] (0.5,1.9) circle (0.3pt);
  \draw[fill] (0.5,1.8) circle (0.3pt);
  \node[vertex] (v22) at (0.5,1.55) {};
  \node[vertex] (v23) at (0.5,1.25) {};
  \draw[fill] (0.5,1) circle (0.3pt);
  \draw[fill] (0.5,0.9) circle (0.3pt);
  \draw[fill] (0.5,0.8) circle (0.3pt);
  \node[vertex] (v24) at (0.5,0.55) {};
  
  \draw[fill] (0.9,2.25) circle (0.3pt);
  \draw[fill] (1,2.25) circle (0.3pt);
  \draw[fill] (1.1,2.25) circle (0.3pt);
  
  \draw[fill] (0.9,1.55) circle (0.3pt);
  \draw[fill] (1,1.55) circle (0.3pt);
  \draw[fill] (1.1,1.55) circle (0.3pt);
  
  \draw[fill] (0.9,1.25) circle (0.3pt);
  \draw[fill] (1,1.25) circle (0.3pt);
  \draw[fill] (1.1,1.25) circle (0.3pt);
  
  \draw[fill] (0.9,0.55) circle (0.3pt);
  \draw[fill] (1,0.55) circle (0.3pt);
  \draw[fill] (1.1,0.55) circle (0.3pt);

  \draw[very thick, fill=black!20] (1.5,1.4) ellipse (0.17 and 1);
  \node at (1.5,2.65) {$\sigma_{2k-1,k+1}$};
  
  \node[vertex] (v31) at (1.5,2.25) {};
  \draw[fill] (1.5,2) circle (0.3pt);
  \draw[fill] (1.5,1.9) circle (0.3pt);
  \draw[fill] (1.5,1.8) circle (0.3pt);
  \node[vertex] (v32) at (1.5,1.55) {};
  \node[vertex] (v33) at (1.5,1.25) {};
  \draw[fill] (1.5,1) circle (0.3pt);
  \draw[fill] (1.5,0.9) circle (0.3pt);
  \draw[fill] (1.5,0.8) circle (0.3pt);
  \node[vertex] (v34) at (1.5,0.55) {};
  
  \draw[fill] (1.9,2.25) circle (0.3pt);
  \draw[fill] (2,2.25) circle (0.3pt);
  \draw[fill] (2.1,2.25) circle (0.3pt);
  
  \draw[fill] (1.9,1.55) circle (0.3pt);
  \draw[fill] (2,1.55) circle (0.3pt);
  \draw[fill] (2.1,1.55) circle (0.3pt);
  
  \draw[fill] (1.9,1.25) circle (0.3pt);
  \draw[fill] (2,1.25) circle (0.3pt);
  \draw[fill] (2.1,1.25) circle (0.3pt);
  
  \draw[fill] (1.9,0.55) circle (0.3pt);
  \draw[fill] (2,0.55) circle (0.3pt);
  \draw[fill] (2.1,0.55) circle (0.3pt);

  \draw[very thick, fill=black!20] (2.5,1.4) ellipse (0.17 and 1);
  \node at (2.5,2.8) {$\sigma_{2k-1,2k-1}$};
  
  \node[vertex] (v41) at (2.5,2.25) [label={[xshift=29pt, yshift=-10pt] \small{$[1, 2k-1]$}}] {};
  \draw[fill] (2.5,2) circle (0.3pt);
  \draw[fill] (2.5,1.9) circle (0.3pt);
  \draw[fill] (2.5,1.8) circle (0.3pt);
  \node[vertex] (v42) at (2.5,1.55) [label={[xshift=31pt, yshift=-10pt] \small{$[k, 2k-1]$}}] {};
  \node[vertex] (v43) at (2.5,1.25) [label={[xshift=39pt, yshift=-10pt] \small{$[k+1, 2k-1]$}}] {};
  \draw[fill] (2.5,1) circle (0.3pt);
  \draw[fill] (2.5,0.9) circle (0.3pt);
  \draw[fill] (2.5,0.8) circle (0.3pt);
  \node[vertex] (v44) at (2.5,0.55) [label={[xshift=41pt, yshift=-10pt] \small{$[2k+1,2k-1]$}}] {};

  \node at (-2,3) {$W$};
  
  \node[vertex] (w1) at (-2,2.5) {};
  \node[vertex] (w2) at (-2,2) {};
  \draw[fill] (-2,1.5) circle (0.3pt);
  \draw[fill] (-2,1.4) circle (0.3pt);
  \draw[fill] (-2,1.3) circle (0.3pt);
  \node[vertex] (w3) at (-2,0.8) {};
  \node[vertex] (w4) at (-2,0.3) {};
  
  \draw[very thick] (w1) -- (v13);
  \draw[very thick] (w1) -- (v14);
  \draw[very thick] (w2) -- (v13);
  \draw[very thick] (w2) -- (v14);
  \draw[very thick] (w3) -- (v13);
  \draw[very thick] (w3) -- (v14);
  \draw[very thick] (w4) -- (v13);
  \draw[very thick] (w4) -- (v14);
 \end{tikzpicture}
 \caption{The subgraph induced by the edges of color $c_{2k-1}$.} \label{fig:coloring}
 \end{center}
\end{figure}
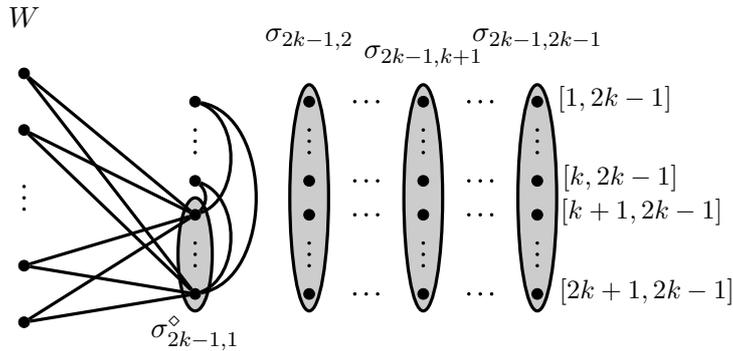

In the next two paragraphs, we show that this $2k$-edge-coloring is well-defined, namely, each edge is assigned exactly one color. Clearly, each edges is assigned at least one color and the edges inside $W$ or between $U_1 \cup \dots \cup U_k$ and $W$ are assigned exactly one color.

Note that $U$ is a $(2k-1)^2$-clique. Let $1 \le i,\ell,s,t \le 2k-1$. Clearly, the edge $[i,s][i,t]$ is only covered by the clique $U_i$. If the edge $[i,s][\ell,t]$ with $i<\ell$ were covered by two cliques, say by one in $\mathcal{C}_j$ and by another one in $\mathcal{C}_{j'}$ for some $1\le j,j'\le 2k-1$, then
\[ \begin{cases}
    t\equiv s+(\ell-i)j & \pmod{2k-1} \\
    t\equiv s+(\ell-i)j' & \pmod{2k-1}
   \end{cases}
\]
would hold, and
since $2k-1$ is prime, we would have $j=j'$, a contradiction. Thus, each edge inside $U$ is covered by at most one clique in $\mathcal{C}_j$ or by the clique $U_i$. On the other hand, considering the number of edges in $U$ and the total number of edges of cliques in each $\mathcal{C}_j$ and $U_i$ yields
$$e(U)=\sum_{i=1}^{2k-1}e(U_i)+\sum_{j=1}^{2k-1}\sum_{\sigma_{ji}\in\mathcal{C}_j}e(\sigma_{ji}).$$
Therefore, the cliques in each $\mathcal{C}_j$ together with the cliques $U_i$ for all $1 \le i,j \le 2k-1$ form an edge-decomposition of the large clique $U$. Hence each edge in $U$ is assigned one color.

Now we show that for any $1\le j,j'\le 2k-1$ with $j \ne j'$, the sub-cliques $\sigma_{j1}^\diamond$ and $\sigma_{j'1}^\diamond$ are vertex-disjoint. Supposing that a vertex $[i,1+(i-1)j]\in V(\sigma_{j1}^\diamond)$ is also contained in $\sigma_{j'1}^\diamond$ for some $1\le i,j,j'\le 2k-1$, we obtain
$$1+(i-1)j\equiv 1+(i-1)j' \pmod{2k-1}.$$
Since $2k-1$ is a prime number, we get $j=j'$, a contradiction. Thus the sub-cliques  $\sigma_{j1}^\diamond$ for all $1 \le j\le 2k-1$ form a vertex-decomposition of $U_{k+1}\cup \dots \cup U_{2k-1}$. Hence, each edge between $U_{k+1}\cup \dots \cup U_{2k-1}$ and $W$ is assigned one color in $\{c_1,\ldots,c_{2k-1}\}$. Therefore, our $2k$-edge-coloring $c$ is well-defined.

Note that for any $1\le j\le 2k-1$, the subgraph induced by the edges of color $c_j$ is a copy of $tK_{2k-1}\cup \big(K_{k-1}+\overline{K}_{n-(k-1)-t(2k-1)} \big)$ with $t=2k-2$, and this graph is extremal for $\ex(n,P_{2k})$ when $n\in \big\{a(2k-1)+(k-1), \, a(2k-1)+k \big\}$. Now consider the edges colored by $c_{2k}$. They are in the cliques $U_i$ with $1 \le i \le 2k-1$, inside $\sigma_{j1}-\sigma_{j1}^\diamond$ with $1 \le j\le 2k-1$, inside $W$, and between $U_1\cup \dots \cup U_k$ and $W$. Note that for any $k+1\le i\le 2k-1$, $U_i$ are independent $(2k-1)$-cliques colored by $c_{2k}$, hence the edges in $U_i$ are also NIM-$P_{2k}$. For all other $c_{2k}$-edges, they construct a large connected component such that $W$ is a clique in the component. Hence none of these edges are NIM-$P_{2k}$.

Therefore,
\[ \big| E(c,P_{2k}) \big| = (2k-1)\ex(n,P_{2k})+(k-1)\binom{2k-1}{2},\]
and we are done.
\vskip 3mm

\begin{remark}
Note that $tK_{2k-1}\cup \big( K_{k-1}+\overline{K}_{n-(k-1)-t(2k-1)} \big)$ is not extremal for $\ex(n,P_{2k})$ when $n\notin \big\{ a(2k-1)+(k-1), a(2k-1)+k \big\}$, but we still have
$$\ex(n,P_{2k})-e\Big(tK_{2k-1}\cup \big(K_{k-1}+\overline{K}_{n-(k-1)-t(2k-1)} \big)\Big)<(k-1)^2.$$
Hence in our construction, when $2k-1$ is prime, the number of NIM-$P_{2k}$ edges is more than $(2k-1)\ex(n,P_{2k})$. That is to say, when $2k-1$ is prime, we have $f_{2k}(n,P_{2k})> (2k-1)\ex(n,P_{2k})$ for every sufficiently large $n$.
\end{remark}

\vskip 3mm

Next we prove the upper bound of $f_{2k}(n,P_{2k})$. Let $c: ~E(K_n)\rightarrow \{c_1,\ldots,c_{2k}\}$ be a $2k$-edge-coloring of $K_n$. We call an edge a $c_i$-edge if it is of color $c_i$ and we let $G_i$ denote the subgraph induced by all $c_i$-edges, for any $1 \le i \le 2k$. 
Without loss of generality, we can assume $e(G_{2k})\ge \binom{n}{2}/2k$.
By Theorem~\ref{Faudree}, there is a path $P$ of at least $\frac{n}{2k}$ vertices in $G_{2k}$.
Let $G_{2k}'$ be the component of $G_{2k}$ which contains the path $P$, and let $X=V(G_{2k}')$ and $Y=V(K_n)-X$. Then we have $|X|\ge \frac{n}{2k}$ and there is no $c_{2k}$-edge between $X$ and $Y$.
Since the component $G_{2k}'$ contains a long path $P$, each edge of $G_{2k}'$ is contained in a monochromatic copy of $P_{2k}$.
Hence, all NIM-$P_{2k}$ $c_{2k}$-edges are contained in $Y$.

For each  $1\le i\le 2k-1$, there are at most $\ex(n,P_{2k})$ NIM-$P_{2k}$ $c_i$-edges.   
If $|Y|\le (k-1)(2k-1)$, then there are at most
$\ex\big(|Y|,P_{2k}\big)\le(k-1)\binom{2k-1}{2}$
NIM-$P_{2k}$ $c_{2k}$-edges. 
Hence, the total number of NIM-$P_{2k}$ edges is at most
\[(2k-1)\ex(n,P_{2k})+(k-1)\binom{2k-1}{2},\]
so we are done.
Therefore, we may assume $|Y|\ge (k-1)(2k-1)+1$.
\vskip 3mm
Let us define a procedure to find pairs $(X_i,Y_i)$ satisfying the following conditions:
\begin{enumerate}[itemsep=0pt]
  \item[(i)] $X_i\subseteq X$ with $|X_i|=2k$ and $Y_i\subseteq Y$ with $|Y_i|=k$ for any $1 \le i \le 2k$,
  \item[(ii)] $Y_i$ and $Y_j$ are disjoint for any $1 \le i,j \le 2k$ with $i \ne j$,
  \item[(iii)] $K_n[X_i,Y_i]$ forms a monochromatic copy of complete bipartite graph for any $1 \le i \le 2k$.
\end{enumerate}

Assume that for some $1 \le i \le 2k$, we have found $(X_1,Y_1),\ldots, (X_{i-1},Y_{i-1})$ which satisfy the conditions. Let $s=(k-1)(2k-1)+1$. If 
\[ \left| Y \setminus \bigcup_{j=1}^{i-1} Y_j \right| \le s-1, \]
then the procedure terminates. Otherwise we choose a subset $Y_i'$ of $Y \setminus \bigcup_{j=1}^{i-1} Y_j$ with $|Y_i'|=s$. Let $Y'_i=\{y_1,\ldots,y_{s}\}$. For each $x\in X$, we define a vector $\vec{\epsilon}(x, Y_i')=(\epsilon_1,\ldots,\epsilon_{s})$ as follows: for any $1 \le j \le s$, let
$$\epsilon_j = i~~\text{if and only if the edge $xy_j$ is colored by}~ c_i.$$
Since no edge between $X$ and $Y$ is colored by $c_{2k}$, we have $\vec{\epsilon}(x,Y_i')\in \{1,\ldots,2k-1\}^{s}$ for any $x \in X$.
For each $\vec{v}\in \{1,\ldots,2k-1\}^{s}$, let $X_{\vec{v}}$ denote the set of vertices $x\in X$ for which $\vec{\epsilon}(x, Y_i')=\vec{v}$.
Hence, $X$ is divided into $(2k-1)^s$ subsets and clearly, at least one subset, say $X_{\vec{v}_i}$, contains at least $|X|/(2k-1)^s$ vertices. Observe that $K_n[X_{\vec{v}_i},y_j]$ is a monochromatic star for any $y_j\in Y_i'$. Since $|Y_i'|=(k-1)(2k-1)+1$ and there are at most $2k-1$ different colors between $X_{\vec{v}_i}$ and $Y_i'$, by pigeonhole principle, there exists a subset $Y_i\subset Y_i'$ such that $|Y_i|=k$ and the edges between $Y_i$ and $X_{\vec{v}_i}$ are monochromatic. That is $K_n[X_{\vec{v}_i},Y_i]$ is a monochromatic complete bipartite graph. Since $n\ge (2k)^{2k^2}$,
$$|X_{\vec{v}_i}|\ge \frac{|X|}{(2k-1)^s}\ge \frac{n}{(2k)^s}\ge 2k.$$
We can choose a subset $X_i$ from $X_{\vec{v}_i}$ with $|X_i|=2k$, thereby finding the pair $(X_i,Y_i)$ as we wanted.

\vskip 3mm
Note that since $Y$ is finite, the procedure terminates. Let $t$ denote the number of steps the algorithm took, and let $(X_1,Y_1),\ldots,(X_t,Y_t)$ be the pairs the algorithm found. Let $Y_0=Y\setminus \bigcup_1^{t} Y_i$. Then we have $|Y_0|\le (k-1)(2k-1)$. For any $1 \le i \le 2k-1$, let $t_i$ denote the number of the pairs $(X_j, Y_j)$ for which the edges of $K_n[X_j, Y_j]$ are of color $c_i$. Without loss of generality, we may assume that $t_1, \ldots, t_h > 0$ for some $1 \le h \le 2k-1$. Then $t=\sum_{i=1}^h t_i$. Let $1 \le i \le h$ and consider the $c_i$-edges. Without loss of generality, we can assume that $K_n[X_1,Y_1],\ldots,K_n[X_{t_i},Y_{t_i}]$ are of color $c_i$. Then each NIM-$P_{2k}$ $c_i$-edge is contained
in $V(K_n) \setminus \bigcup_{j=1}^{t_i}(X_j\cup Y_j)$. Since the sets $Y_1, \ldots, Y_{t_i}$ are pairwise disjoint and $X_1, \ldots, X_{t_i} \subseteq X$, we have
\[ \left| \bigcup_{j=1}^{t_i} (X_j\cup Y_j) \right| \ge t_i k+2k, \]
thus the number of NIM-$P_{2k}$ $c_i$-edges is at most $\ex(n-t_ik-2k, \, P_{2k})$. Now let $h+1 \le i \le 2k-1$ (if such an index exists). Since $t_i = 0$, the number of NIM-$P_{2k}$ $c_i$-edges is at most $\ex(n,P_{2k})$. 

As we have proved,  all NIM-$P_{2k}$ $c_{2k}$-edges are contained in $Y$ and $|Y|\le (k-1)(2k-1)+tk$. Therefore, the total number of NIM-$P_{2k}$ edges is at most
\begin{equation}\label{eq4.1}
\ex\big((k-1)(2k-1)+tk,P_{2k}\big)+\sum_{i=1}^h \ex(n-t_ik-2k,P_{2k})+(2k-1-h)\ex(n,P_{2k}).
\end{equation}
To prove the final result, we need the following lemma.
\begin{lemma}
Let $n_1$, $n_2$ and $c$ be  constants. Then we have
$$\ex(n_1,P_{\ell})+\ex(n_2,P_{\ell})< \ex(n_1-c,P_{\ell})+\ex(n_2+c+\ell,P_{\ell}).$$
\end{lemma}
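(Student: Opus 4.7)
The plan is to prove the lemma directly from the closed-form expression of Faudree and Schelp (Theorem~\ref{Faudree}). Writing any non-negative integer $n$ as $n=a(\ell-1)+b$ with $0\le b\le \ell-2$, and denoting $b(n):=b$, a short algebraic manipulation converts the formula $\ex(n,P_\ell)=a\binom{\ell-1}{2}+\binom{b}{2}$ into
\[ \ex(n,P_\ell) \;=\; \frac{(\ell-2)\,n}{2} \;-\; \frac{b(n)\,(\ell-1-b(n))}{2}, \]
which isolates a linear main term and a bounded residue correction depending only on $n\bmod (\ell-1)$. This identity is the main computational tool.

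Next I would substitute this identity into the four Turán numbers appearing in the lemma and evaluate
\[ D \;:=\; \ex(n_1-c,P_\ell)+\ex(n_2+c+\ell,P_\ell)-\ex(n_1,P_\ell)-\ex(n_2,P_\ell). \]
The linear terms involving $c$ cancel exactly, while the extra $\ell$ on the right-hand side contributes $\frac{(\ell-2)\ell}{2}$ to $D$. Writing $r(n):=\frac{b(n)(\ell-1-b(n))}{2}$, the residue contribution is $r(n_1)+r(n_2)-r(n_1-c)-r(n_2+c+\ell)$. Since $0\le r(n)\le \frac{(\ell-1)^2}{8}$ for every integer $n$ (the maximum of $b(\ell-1-b)$ on $\{0,\dots,\ell-2\}$ is attained around $b=(\ell-1)/2$), the worst case gives a residue loss of at most $\frac{(\ell-1)^2}{4}$. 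Hence
\[ D \;\ge\; \frac{(\ell-2)\ell}{2}-\frac{(\ell-1)^2}{4} \;=\; \frac{\ell^2-2\ell-1}{4}, \]
which is strictly positive for $\ell\ge 3$ and in particular for $\ell=2k\ge 4$, as needed to apply this lemma in the proof of Theorem~\ref{Thm3}.

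There is no substantive obstacle: the lemma is just the quantitative statement that adding one vertex to a $P_\ell$-extremal graph increases its edge count by about $\frac{\ell-2}{2}$ up to a bounded residue wobble, and the extra $\ell$ vertices on the right-hand side yield a surplus of $\frac{(\ell-2)\ell}{2}$ edges, which easily dominates the worst-case residue error of $\frac{(\ell-1)^2}{4}$ as soon as $\ell\ge 3$. The only point requiring minor care is the bookkeeping of the four residues and verifying that their signs cannot all conspire to kill the surplus, which is handled by the uniform bound $r(n)\le \frac{(\ell-1)^2}{8}$.
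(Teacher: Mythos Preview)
Your proof is correct and follows essentially the same approach as the paper: both invoke the Faudree--Schelp formula (Theorem~\ref{Faudree}) and compare the linear main term of size $\frac{(\ell-2)\ell}{2}$ against bounded residue corrections coming from the remainders modulo $\ell-1$. Your rewriting $\ex(n,P_\ell)=\tfrac{(\ell-2)n}{2}-\tfrac{b(n)(\ell-1-b(n))}{2}$ makes the bookkeeping a bit cleaner than the paper's version, which instead lower-bounds the right-hand side via superadditivity $\ex(n_2+c+\ell,P_\ell)\ge \ex(n_2+c,P_\ell)+\ex(\ell,P_\ell)$ and upper-bounds the left-hand side by $\tfrac{\ell-2}{2}(n_1+n_2)$ before comparing.
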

\begin{proof} Let $n_1-c=a_1(\ell-1)+b_1$ and $n_2+c=a_2(\ell-1)+b_2$, where $0\le b_1,b_2\le\ell-2$. By Theorem~\ref{Faudree}, we have
\[\begin{split}
\ex(n_1-c,P_{\ell})&+\ex(n_2+c+\ell,P_{\ell})\ge \ex(n_1-c,P_{\ell})+\ex(n_2+c,P_{\ell})+\ex(\ell,P_{\ell}) \\
&>(a_1+a_2)\binom{\ell-1}{2}+\binom{b_1}{2}+\binom{b_2}{2}+\binom{\ell-1}{2}\\
\end{split}\]
and
\[\ex(n_1,P_{\ell})+\ex(n_2,P_{\ell})\le \frac{\ell-2}{2}(n_1+n_2)=(a_1+a_2)\binom{\ell-1}{2}+(b_1+b_2)\frac{\ell-2}{2}.\]
Hence we have
\[\begin{split}
\ex(n_1-c,P_{\ell})&+\ex(n_2+c+\ell,P_{\ell})- \big( \ex(n_1,P_{\ell})+\ex(n_2,P_{\ell}) \big)\\
&>\binom{b_1}{2}+\binom{b_2}{2}+\binom{\ell-1}{2}-(b_1+b_2)\frac{\ell-2}{2} >0.
\end{split}\]
we are done. 
\end{proof}

\vskip 3mm
When applying the above lemma to~\eqref{eq4.1}, we get
\begin{align*}
\ex\big((k-1)(2k-1)+tk,P_{2k}\big)&+\sum_{i=1}^s \ex(n-t_ik-2k,P_{2k}) +(2k-1-s)\ex(n,P_{2k})\\
<&(2k-1)\ex(n,P_{2k})+\ex\big((k-1)(2k-1),P_{2k}\big)\\
=&(2k-1)\ex(n,P_{2k})+(k-1)\binom{2k-1}{2}.
\end{align*}
Thus the proof is complete. $\hfill\blacksquare$

\section{Acknowledgements}
The research of the authors Gy\H{o}ri and Salia was partially supported by the National Research, Development and Innovation Office NKFIH, grants  K132696, SNN-135643 and K126853. 
The research of Salia was supported by the Institute for Basic Science (IBS-R029-C4). 
The research of Tompkins was supported by NKFIH grant K135800.

\bibliography{ref.bib}

\end{document}